\newtheorem{theorem} {Theorem}
\newtheorem{lemma} {Lemma}
\newtheorem{corollary} {Corollary}
\newtheorem{assumption} {Assumption}
\newtheorem{observation} {Observation}
\newtheorem{proposition} {Proposition}
\def\u{{\mathbf{u}}}
\def\v{{\mathbf{v}}}
\def\X{{\mathbf{X}}}
\def\Y{{\mathbf{Y}}}
\def\A{{\mathbf{A}}}
\def\B{{\mathbf{B}}}
\def\G{{\mathbf{G}}}
\def\V{{\mathbf{V}}}
\def\Z{{\mathbf{Z}}}
\def\U{{\mathbf{U}}}
\def\R{{\mathbf{R}}}
\def\matE{{\mathbf{E}}}
\newcommand{\mT}{\mathcal{T}}
\newcommand{\mX}{\mathcal{X}}
\newcommand{\mS}{\mathcal{S}}
\newcommand{\mbS}{\mathbb{S}}
\newcommand{\E}{\mathbb{E}}
\newcommand{\projgrad}{\mathcal{G}}
\newcommand{\trace}{\textrm{Tr}}
\newcommand{\rank}{\textrm{rank}}
\newcommand{\reals}{\mathbb{R}}
\newcommand{\sym}{\mathbb{S}}
\newcommand{\ball}{\mathbb{B}}
\title{On the Convergence of Projected-Gradient Methods with Low-Rank Projections for Smooth Convex Minimization over Trace-Norm Balls and Related Problems}
\date{}
\author{Dan Garber \\
Technion - Israel Institute of Technology \\
{\small{dangar@technion.ac.il}}}
\begin{document}
 \maketitle

\begin{abstract}
Smooth convex minimization over the unit trace-norm ball is an important optimization problem in machine learning, signal processing, statistics and other fields, that underlies many tasks in which one wishes to recover a low-rank matrix given certain measurements. While first-order methods for convex optimization enjoy optimal convergence rates, they require in worst-case to compute a full-rank singular value decomposition on each iteration in order to compute the Euclidean projection onto the trace-norm ball. These full-rank SVD computations however prohibit the application of such methods to large-scale problems. A simple and natural heuristic to reduce the computational cost of such methods is to approximate the Euclidean projection using only a low-rank singular value decomposition. This raises the question if, and under what conditions, this simple heuristic can indeed result in provable convergence to optimal solutions.

In this paper we show that any optimal solution is a center of a Euclidean ball inside-which the projected-gradient mapping admits rank that is at most the multiplicity of the largest singular value of the gradient vector at this optimal point. Moreover, the radius of the ball scales with the spectral gap of this gradient vector. We show how this readily implies the local convergence (i.e., from a "warm-start" initialization) of standard first-order methods such as the projected-gradient method and accelerated gradient methods, using only low-rank SVD computations. We also quantify the effect of "over-parameterization", i.e., using SVD computations with higher rank, on the radius of this ball, showing it can increase dramatically with moderately larger rank. We extend our results also to the setting of smooth convex minimization with trace-norm regularization and smooth convex optimization over bounded-trace positive semidefinite matrices. Our theoretical investigation is supported by concrete empirical evidence that demonstrates the \textit{correct} convergence of first-order methods with low-rank projections for the matrix completion task on real-world datasets.
\end{abstract}

\section{Introduction} 
The main subject of investigation in this paper is the following optimization problem:
\begin{eqnarray}\label{eq:optProb}
\min_{\Vert{\X}\Vert_*\leq 1}f(\X),
\end{eqnarray}
where $f:\reals^{m\times n}\rightarrow\reals$ is convex and $\beta$-smooth (i.e., gradient-Lipschitz), and $\Vert{\cdot}\Vert_*$ denotes the trace-norm, i.e., sum of singular values (aka the nuclear norm).

Problem \eqref{eq:optProb} has received much attention in recent years and has many applications in machine learning, signal processing, statistics and engineering, such as the celebrated matrix completion problem \cite{Candes09, Recht11, Jaggi10}, affine rank minimization problems \cite{recht2010guaranteed, Prateek10}, robust PCA \cite{Candes11}, and more.

Many standard first-order methods such as the projected-gradient descent \cite{Nesterov13}, Nesterov's accelerated gradient method \cite{Nesterov13} and FISTA \cite{FISTA}, when applied to Problem \eqref{eq:optProb}, require on each iteration to compute the projected-gradient mapping w.r.t. the trace-norm ball, given by
\begin{eqnarray}\label{eq:traceNormProj}
\Pi_{\Vert\cdot\Vert_*\leq 1}[\X-\eta\nabla{}f(\X)],
\end{eqnarray}
for some current iterate $\X\in\reals^{m\times n}$ and step-size $\eta>0$, where $\Pi_{\Vert\cdot\Vert_*\leq 1}[\cdot]$ denotes the Euclidean projection onto the unit trace-norm ball. 

It is well known that computing the projection step in \eqref{eq:traceNormProj} amounts to computing the singular value decomposition of the matrix $\Y = \X-\eta\nabla{}f(\X)$ and projecting the vector of singular values onto the unit simplex (keeping the left and right singular vectors without change). Unfortunately, in worst-case, a full-rank SVD computation is required which amounts to $O(mn^2)$ runtime per iteration, assuming $m\leq n$. This naturally prohibits the use of such methods for large scale problems in which both $m,n$ are large.

Since \eqref{eq:traceNormProj} requires in general expensive full-rank SVD computations, a very natural and simple heuristic to reduce the computational complexity is to replace the expensive projection operation $\Pi_{\Vert\cdot\Vert_*\leq 1}[\Y]$ with an  approximate "projection", which (accurately) projects the matrix $\widehat{\Y}_r$ - the best rank-$r$ approximation of $\Y$. That is, we consider replacing $\Pi_{\Vert\cdot\Vert_*\leq 1}[\Y]$ with the operation
\begin{eqnarray*}
\widehat{\Pi}^r_{\Vert\cdot\Vert_*\leq 1}[\Y] := \Pi_{\Vert{\cdot}\Vert_*\leq 1}[\widehat{\Y}_r] = \Pi_{\Vert\cdot\Vert_*\leq 1}[\U_r\Sigma_r\V_r^{\top}],
\end{eqnarray*}
where $\U_r\Sigma_r\V_r^{\top}$ corresponds to the rank-$r$ truncated SVD of $\Y$ (i.e., we consider only the top $r$ components of the SVD).

Using state-of-the-art Krylov subspace methods, such as the Power Iterations algorithm or the Lanczos algorithm (see for instance the classical text \cite{golub2012matrix} and also the recent works \cite{musco2015randomized, allen2016lazysvd}), $\widehat{\Pi}^r_{\Vert\cdot\Vert_*\leq 1}[\cdot]$ could be computed with runtime proportional to only $O(rmn)$ - a very significant speedup when $r << \min\{m,n\}$. Moreover, in many problems the gradient matrix is sparse (e.g., in the well studied matrix completion problem), in which case further significant accelerations apply. The drawback of course is that when using the approximated procedure $\widehat{\Pi}^r_{\Vert\cdot\Vert_*\leq 1}[\Y]$ , the highly desired convergence guarantees of first-order methods need no longer hold. 

A motivation for the plausible effectiveness of this heuristic is that Problem \eqref{eq:optProb} is often used as a convex relaxation for non-convex rank-constrained optimization problems, which are often assumed to admit a low-rank global minimizer (as is in all examples provided above). Given this low-rank structure of the optimal solution, one may wonder if indeed storing and manipulating high-rank matrices when optimizing \eqref{eq:optProb} is mandatory, or alternatively, that at some stage during the run of the algorithm the iterates all become low-rank.

It is thus natural to ask: \textit{under which conditions is it possible to replace the projection $\Pi_{\Vert{\cdot}\Vert_*\leq 1}[\cdot]$ with the approximation $\widehat{\Pi}^r_{\Vert\cdot\Vert_*\leq 1}[\cdot]$, while keeping the original convergence guarantees of first-order methods?} 

Or, put differently, we ask \textit{for which $\X,r$ (and a suitable $\eta$) does 
\begin{eqnarray}\label{eq:equalProj}
\Pi_{\Vert\cdot\Vert_*\leq 1}[\X-\eta\nabla{}f(\X)] = \widehat{\Pi}^r_{\Vert\cdot\Vert_*\leq 1}[\X-\eta\nabla{}f(\X)]
\end{eqnarray}
hold?}

Our main result in this paper is the formulation and proof of the following proposition, presented at this point only informally.
\begin{proposition}\label{prop:main}
For any optimal solution $\X^*$ to Problem \eqref{eq:optProb}, if the truncated-SVD rank parameter $r$ satisfies $r \geq r_0=\#\sigma_1(\nabla{}f(\X^*))$ - the multiplicity of the largest singular value in the gradient vector $\nabla{}f(\X^*)$ then, there exists a Euclidean ball centered at $\X^*$, inside-which \eqref{eq:equalProj} holds. Moreover, the radius of the ball scales with the spectral gap $\sigma_1(\nabla{}f(\X^*)) - \sigma_{r_0+1}(\nabla{}f(\X^*))$.
\end{proposition}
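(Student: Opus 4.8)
The plan is to reduce the desired equality \eqref{eq:equalProj} to a statement about the \emph{rank} of the projected-gradient mapping. Observe that whenever the exact projection $\Pi_{\Vert\cdot\Vert_*\leq 1}[\Y]$ retains only singular values among the top $r$ of $\Y$ (equivalently, it has rank at most $r$ and annihilates the tail), it depends solely on the top-$r$ truncation $\widehat{\Y}_r$, so that $\Pi_{\Vert\cdot\Vert_*\leq 1}[\Y]=\widehat{\Pi}^r_{\Vert\cdot\Vert_*\leq 1}[\Y]$; this is because the simplex soft-thresholding operator leaves the leading singular vectors untouched and zeroes out every coordinate below the threshold. Hence it suffices to exhibit a Euclidean ball around $\X^*$ inside which $\rank\big(\Pi_{\Vert\cdot\Vert_*\leq 1}[\X-\eta\nabla f(\X)]\big)\leq r_0\leq r$. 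I would start from the first-order optimality of $\X^*$, which for a suitable step-size $\eta>0$ is equivalent to the fixed-point identity $\X^*=\Pi_{\Vert\cdot\Vert_*\leq 1}[\X^*-\eta\G]$, where I abbreviate $\G:=\nabla f(\X^*)$, together with the fact that $\X^*$ minimizes the linear form $\langle\G,\cdot\rangle$ over the ball.

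Next I would pin down the spectral structure at the optimum. Writing the SVD $\G=\sum_i\sigma_i\u_i\v_i^{\top}$ with $\sigma_1=\dots=\sigma_{r_0}>\sigma_{r_0+1}\geq\cdots$ and collecting the leading vectors into $\U_1,\V_1$, the linear-minimization optimality forces $\X^*=-\U_1\M\V_1^{\top}$ for some $\M\succeq0$ with $\trace(\M)=1$; in particular $\rank(\X^*)\leq r_0$. Substituting into $\Y^*:=\X^*-\eta\G$ and splitting along the top subspace and its complement, the top block equals $-\U_1(\M+\eta\sigma_1\I)\V_1^{\top}$ and the tail equals $-\eta\sum_{i>r_0}\sigma_i\u_i\v_i^{\top}$. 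Consequently the $r_0$ largest singular values of $\Y^*$ are the eigenvalues of $\M+\eta\sigma_1\I$, each at least $\eta\sigma_1$, while every remaining singular value is $\eta\sigma_i\leq\eta\sigma_{r_0+1}<\eta\sigma_1$. A short computation then shows that the simplex-projection threshold of $\Y^*$ is exactly $\tau^*=\eta\sigma_1$, which sits strictly above the tail and recovers $\Pi_{\Vert\cdot\Vert_*\leq 1}[\Y^*]=\X^*$, confirming rank $\leq r_0$ with a separation of $\eta(\sigma_1-\sigma_{r_0+1})$ between the surviving and annihilated groups.

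I would then propagate this separation to a neighborhood by a perturbation argument. For $\X$ with $\Vert\X-\X^*\Vert\leq\rho$, $\beta$-smoothness gives $\Vert\Y(\X)-\Y^*\Vert\leq(1+\eta\beta)\Vert\X-\X^*\Vert$, and by Weyl's inequality each singular value of $\Y(\X):=\X-\eta\nabla f(\X)$ differs from the corresponding singular value of $\Y^*$ by at most $\varepsilon:=(1+\eta\beta)\rho$. To control the data-dependent projection threshold I would use monotonicity of the map $h(\tau):=\sum_i\max\{\sigma_i(\Y(\X))-\tau,0\}$, whose unique root is the true threshold: the projection has rank at most $r_0$ precisely when $h\big(\sigma_{r_0+1}(\Y(\X))\big)\geq1$. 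Lower-bounding $h$ at this point by the contribution of the top $r_0$ coordinates and inserting the Weyl estimates yields $h\big(\sigma_{r_0+1}(\Y(\X))\big)\geq 1+r_0\big(\eta(\sigma_1-\sigma_{r_0+1})-2\varepsilon\big)$, which exceeds $1$ as soon as $\varepsilon\leq\tfrac{\eta}{2}(\sigma_1-\sigma_{r_0+1})$. Unwinding, this holds throughout the ball of radius $\rho=\dfrac{\eta\,(\sigma_1(\G)-\sigma_{r_0+1}(\G))}{2(1+\eta\beta)}$, so the projection retains only the top $r_0\leq r$ singular values there and \eqref{eq:equalProj} follows; the radius manifestly scales with the spectral gap, as claimed.

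I expect the genuine obstacle to be the third step, namely controlling the nonlinear, globally-coupled soft-thresholding threshold under perturbation: unlike the singular values themselves, the threshold depends on all coordinates simultaneously, so a naive Lipschitz bound is awkward. The monotonicity reformulation through $h$ sidesteps this by reducing the rank condition to the single scalar inequality $h\big(\sigma_{r_0+1}(\Y(\X))\big)\geq1$, which is robust to the Weyl perturbations. I would also need to dispatch the benign edge cases in which $\M$ is rank-deficient (so fewer than $r_0$ values survive, which only helps) or $\Y(\X)$ falls inside the ball (forcing the tail to vanish outright), neither of which breaks the rank bound.
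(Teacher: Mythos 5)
Your proposal is correct and follows essentially the same route as the paper's own proof: first-order optimality pins down the spectrum of $\Y^*=\X^*-\eta\nabla f(\X^*)$ (the paper's Lemma \ref{lem:eigsOfOptGrad}), a smoothness-plus-Weyl perturbation argument controls the singular values of $\Y(\X)$ for nearby $\X$, and the threshold criterion for the trace-norm projection (the paper's Lemma \ref{lem:traceNormProj}) converts the resulting separation $\sum_{i=1}^{r_0}\gamma_i - r_0\gamma_{r_0+1}\geq 1$ into the rank bound and hence into \eqref{eq:equalProj}. The only substantive difference is that the paper bounds the perturbation of the sum $\sum_{i\leq r_0}\sigma_i$ via Ky Fan's inequality rather than term-by-term Weyl estimates, which improves your radius $\frac{\eta\,\gap}{2(1+\eta\beta)}$ to $\frac{\eta\,\gap}{(1+1/\sqrt{r_0})(1+\eta\beta)}$ — a constant factor immaterial to the proposition as stated.
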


 
As we show, Proposition \ref{prop:main} readily implies that standard gradient methods such as the \textit{Projected Gradient Method} and \textit{Nesterov's Accelerated Gradient Method}, when initialized in the proximity of an optimal solution, converge with their original convergence guarantees, i.e., \textbf{producing the exact same sequences of iterates}, when the exact Euclidean projection is replaced with the truncated-SVD-based projection $\widehat{\Pi}^r_{\Vert\cdot\Vert_*\leq 1}[\cdot]$.

Some complexity implications of our results to first-order methods for Problem \eqref{eq:optProb}  are summarized in Table \ref{table:compare}, together with comparison to other first-order methods.

The connection between $r$ - the rank parameter in the approximated projection $\widehat{\Pi}_{\Vert\cdot\Vert_*\leq 1}[\cdot]$ and the parameter $\#\sigma_1(\nabla{}f(\X^*))$ may seem unintuitive at first. In particular, one might expect that $r$ should be comparable directly with $\rank(\X^*)$. However, as we show they are indeed tightly related. In particular, the radius of the ball around an optimal solution $\X^*$ in which \eqref{eq:equalProj} holds is strongly related to spectral gaps in the gradient vector $\nabla{}f(\X^*)$. This further implies "over-parameterization" results in which we show how the radius of the ball inside which \eqref{eq:equalProj} applies, increases with the rank parameter $r$, showing it can increase quite dramatically with only a moderate increase in $r$.
We also bring two complementary results showing that $\rank(\X^*) < \#\sigma_1(\nabla{}(\X^*))$ implies that the optimization problem \eqref{eq:optProb} is ill-posed in a sense, and that in general, a result in the spirit of Proposition \ref{prop:main} may not hold when $r < \#\sigma_1(\nabla{}f(\X^*))$.

 \begin{table*}\renewcommand{\arraystretch}{1.3}
{\footnotesize
\begin{center}
  \begin{tabular}{| l | c | c | c | c |}    \hline
    Algorithm  &  Conv. & Rate &  SVD size & Max iterates rank\\\hline
    \multicolumn{5}{|c|}{$\downarrow$~~  $\beta$-smooth  and convex $f$ ~~$\downarrow$} \\ \hline
 Projected Gradient  & global  & $\beta/\epsilon$ & $d$ & $d$ \\\hline
 Accelerated Gradient  & global  & $\sqrt{\beta/\epsilon}$ & $d$ & $d$ \\\hline
 Frank-Wolfe \cite{Jaggi10}  & global  & $\beta/\epsilon$ & $1$ & $\min\{t,d\}$ \\\hline
 Proj. Grad. (this paper) & local  & $\beta/\epsilon$ & $\#\sigma_1(\nabla{}f(\X^*))$ & $\#\sigma_1(\nabla{}f(\X^*))$ \\\hline
 Acc. Grad. (this paper) & local  & $\sqrt{\beta/\epsilon}$ & $\#\sigma_1(\nabla{}f(\X^*))$ & $\#\sigma_1(\nabla{}f(\X^*))$ \\\hline
    \multicolumn{5}{|c|}{$\downarrow$~~  $\beta$-smooth  and $\alpha$-strongly convex $f$~~$\downarrow$} \\ \hline
 Projected Gradident  & global  & $(\beta/\alpha)\log{1/\epsilon}$ & $d$ & $d$\\\hline
  Accelerated Gradient &  global  & $\sqrt{\beta/\alpha}\log{1/\epsilon}$ & $d$ & $d$\\\hline
 ROR-FW  \cite{Garber16a} & global  & $\frac{\beta}{\sigma_{\min}(\X^*)\sqrt{\alpha\epsilon}}$ & $1$ & $\min\{t,d\}$ \\\hline
 BlockFW \cite{allen2017linear} & global  & $(\beta/\alpha)\log{1/\epsilon}$ & $\rank(\X^*)$ & $\min\{\rank(\X^*)(\beta/\alpha),d\}$ \\\hline
  Proj. Grad. (this paper)  & local  & $(\beta/\alpha)\log{1/\epsilon}$ & $\#\sigma_1(\nabla{}f(\X^*))$& $\#\sigma_1(\nabla{}f(\X^*))$\\\hline
 Acc. Grad. (this paper) & local  & $\sqrt{\beta/\alpha}\log{1/\epsilon}$ & $\#\sigma_1(\nabla{}f(\X^*))$ & $\#\sigma_1(\nabla{}f(\X^*))$ \\\hline 
  \end{tabular}
  \caption{Comparison of first-order methods for solving Problem \eqref{eq:optProb}. The 2nd column (from the left) states the type of convergence (either from arbitrary initialization or from a "warm-start"),  the 3rd column states the number iterations to reach $\epsilon$ accuracy, the 4th column states and upper bound on the rank of SVD required on each iteration, and the last column states an upper-bound on the rank of iterates produced by the method.
  }\label{table:compare}
\end{center}
}
\end{table*}\renewcommand{\arraystretch}{1}

\subsection{Organization of this paper}

The rest of this paper is organized as follows. In the remaining of this section we discuss related work. In Section \ref{sec:traceBall} we present our main result: we formalize and prove Proposition \ref{prop:main} in the context of Problem \eqref{eq:optProb}. In this section we also present several complementing results that further strengthen our claims. In Section \ref{sec:traceBallAlgs} we demonstrate how the results of Section \ref{sec:traceBall} readily imply the local convergence of standard projection-based first-order methods for Problem \eqref{eq:optProb}, using only low-rank SVD to compute the Euclidean projection. In Sections \ref{sec:traceReg} and \ref{sec:SDP} we formalize and prove versions of Proposition \ref{prop:main} for smooth convex optimization with trace-norm regularization, and smooth convex optimization over the set of unit-trace positive semidefinite matrices, respectively. Finally, in Section \ref{sec:empirics} we present supporting empirical evidence.

\subsection{Related work}
The subject of efficient algorithms for low-rank matrix optimization problems has enjoyed significant interest in recent years. Below we survey some notable results both for the convex problem \eqref{eq:optProb}, as-well as other related convex models, and also for related non-convex optimization problems. 

\paragraph*{Convex methods:} 
Besides projection-based methods, other highly popular methods for Problem \eqref{eq:optProb} are conditional gradient methods (aka Frank-Wolfe algorithm) \cite{FrankWolfe, Jaggi13b, Jaggi10, Hazan08}. These algorithms require only a rank-one SVD computation on each iteration, hence each iteration is very efficient, however their convergence rates, which are typically of the form $O(1/t)$ for smooth problems (even when the objective is also strongly convex) are in general inferior to projection-based methods such as Nesterov's accelerated method \cite{Nesterov13} and FISTA \cite{FISTA}. Recently, several works have developed variants of the basic method with faster rates, though these hold only under the additional assumption that the objective is also strongly convex  \cite{Garber16a, allen2017linear, garber2018fast}. Additionally, these new variants require to store in memory potentially high-rank matrices, which may limit their applicability to large problems. In \cite{yurtsever2017sketchy} the authors present a novel conditional gradient method which enjoys a low-memory footprint for certain instances of \eqref{eq:optProb} such as the well known matrix completion problem, however there is no improvement in convergence rate beyond that of the standard method.

Besides first-order conditional gradient-type methods, in \cite{MishraMBS13} the authors present a second-order trust-region algorithm for the trace norm-regularized variant of \eqref{eq:optProb}.

\paragraph*{Nonconvex methods:} Problem \eqref{eq:optProb} is often considered as a convex relaxation to the non-convex problem of minimizing $f$ under an explicit rank constraint. Two popular approaches to solving this non-convex problem are i) apply projected gradient descent to the rank-constrained formulation, in which case the projection is onto the set of low-rank matrices, and ii) incorporating the rank-constraint in the objective by considering the factorized objective $g(\U,\V):=f(\U\V^{\top})$, where $\U,\V$ are $m\times r$ and $n\times r$ respectively, where $r$ is an upper-bound on the rank, but otherwise unconstrained. Obtaining global convergence guarantees for these non-convex optimization problems is a research direction of significant interest in recent years, however efficient algorithms are obtained usually only under specific statistical assumptions on the data, which we do not make in this current work, see for instance \cite{Prateek10, jain2014iterative, chen2015fast, bhojanapalli2016global, ge2016matrix} and references therein.

In the works \cite{Dropping16, park2018finding} the authors consider first-order methods for  factorized formulations of problems related to \eqref{eq:optProb}, which are not based on statistical assumptions. In these works the authors establish the convergence of specific algorithms from a good initialization point to the global low-rank optimum with convergence rates similar to that of the standard projected gradient descent method.

\section{Optimization over the Unit Trace-Norm Ball}\label{sec:traceBall}
We begin with introducing some notation. For a positive integer $n$ we let $[n]$ denote the set $\{1,2,\dots,n\}$.
We let $\A\bullet\B$ denote the standard inner product for matrices, i.e., $\A\bullet\B = \trace(\A^{\top}\B)$. For a real matrix $\A$, we let $\sigma_i(\A)$ denote its $i$th largest singular value (including multiplicities), and we let $\#\sigma_i(\A)$ denote the multiplicity of the $i$th largest singular value. Similarly, for a real symmetric matrix $\A$, we let $\lambda_i(\A)$ denote its $i$th largest (signed) eigenvalue, and we let $\#\lambda_i(\A)$ denote the multiplicity of the $i$the largest eigenvalue.
We denote by $\mX^*$ the set of optimal solutions to Problem \eqref{eq:optProb}, and by $f^*$ the corresponding optimal value. 

For any $\X\in\reals^{m\times n}$, step-size $\eta$ and radius $\tau$ we denote the projected gradient mapping w.r.t. the trace-norm ball of radius $\tau$:
\begin{eqnarray*}
\projgrad_{\eta,\tau}(\X) := \Pi_{\Vert\cdot\Vert_*\leq\tau}\left[{\X-\eta\nabla{}f(\X)}\right].
\end{eqnarray*}
When $\tau=1$, i.e., we consider the unit trace-norm ball, we will omit the $\tau$ subscript and simply write $\projgrad_{\eta}$.

Given an optimal solution $\X^*\in\mX^*$, a step-size $\eta >0$, and an integer $r$ in  $\{\rank(\X^*),\dots\min\{m,n\}\}$, we let $\delta(\X^*,\eta,r)$ denote the radius of the largest Euclidean ball centered at $\X^*$, such that for all $\X$ in the ball it holds that $\rank\left({\projgrad_{\eta}(\X)}\right) \leq r$. Or equivalently, $\delta(\X^*,\eta,r)$ is the solution to the optimization problem
\begin{eqnarray*}
\sup\delta \geq 0 \quad \textrm{s.t.} \quad \forall\X\in\ball(\X^*,\delta):\rank\left({\projgrad_{\eta}(\X)}\right)\leq r,
\end{eqnarray*}
where $\ball(\X,R)$ denotes the Euclidean ball of radius $R$ centered at $\X$.


Towards formalizing and proving Proposition \ref{prop:main}, deriving lower-bounds on the radius $\delta(\X^*,\eta,r)$ will be our main interest in this section.

Since our objective is to study the properties of the projected-gradient mapping over the trace-norm ball, we begin with the following well-known lemma which connects between the SVD of the point to project and the resulting projection.

\begin{lemma}[projection onto the trace-norm ball]\label{lem:traceNormProj}
Fix a parameter $\tau >0$. Let $\X\in\reals^{m\times n}$ and consider its singular-value decomposition $\X = \sum_{i=1}^{\min\{m,n]}\sigma_i\u_i\v_i^{\top}$. If $\Vert{\X}\Vert_* \geq \tau$, then the Euclidean projection of $\X$ onto the trace-norm ball of radius $\tau$ is given by
\begin{eqnarray}\label{eq:traceBallProj}
\Pi_{\Vert{\cdot}\Vert_*\leq \tau}[\X] = \sum_{i=1}^{\min\{m,n\}}\max\{0,\sigma_i-\sigma\}\u_i\v_i^{\top},
\end{eqnarray}
where $\sigma \geq 0$ is the unique solution to the equation $\sum_{i=1}^{\min\{m,n\}}\max\{0,\sigma_i-\sigma\} = \tau$.

Moreover, there exists $r\in\{1,\dots,\min\{m,n\}-1\}$ such that $\sum_{i=1}^r\sigma_i \geq \tau+r\sigma_{r+1}$ if and only if $\rank\left({\Pi_{\Vert\cdot\Vert_*\leq 1}[\X]}\right) \leq r$. 
\end{lemma}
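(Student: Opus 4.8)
The plan is to prove the two parts in sequence, using the explicit soft-thresholding form of the projection (the first part) as the engine for the rank bound in the ``Moreover'' statement. For the projection formula itself, I would first reduce the matrix problem $\min_{\Vert{\Y}\Vert_*\leq\tau}\tfrac{1}{2}\Vert{\Y-\X}\Vert_F^2$ to a problem on singular values only. Since both the Frobenius distance and the trace-norm are unitarily invariant, and $\tfrac12\Vert{\Y-\X}\Vert_F^2 = \tfrac12(\Vert{\Y}\Vert_F^2+\Vert{\X}\Vert_F^2)-\Y\bullet\X$, von Neumann's trace inequality $\Y\bullet\X\leq\sum_i\sigma_i(\Y)\sigma_i(\X)$ (with equality iff $\Y$ shares the left and right singular vectors of $\X$) shows the optimal $\Y$ retains the singular vectors of $\X$. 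The problem collapses to projecting the nonnegative vector $(\sigma_1,\dots,\sigma_k)$, $k=\min\{m,n\}$, onto $\{\mathbf{s}\in\reals^k~:~\sum_i s_i\leq\tau\}$. The KKT conditions for this separable problem give $s_i=\max\{0,\sigma_i-\sigma\}$ with multiplier $\sigma\geq 0$ enforcing the active constraint $\sum_i\max\{0,\sigma_i-\sigma\}=\tau$ (active because $\Vert{\X}\Vert_*\geq\tau$). Existence and uniqueness of $\sigma$ follow by setting $\phi(s):=\sum_{i=1}^k\max\{0,\sigma_i-s\}$, which is continuous, nonincreasing, strictly decreasing on $[0,\sigma_1)$, with $\phi(0)=\Vert{\X}\Vert_*\geq\tau$ and $\phi(\sigma_1)=0$; the intermediate value theorem together with strict monotonicity pins down a unique $\sigma\in[0,\sigma_1)$.

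Given this formula, the ``Moreover'' claim reduces to a statement purely about the threshold $\sigma$: by \eqref{eq:traceBallProj} the rank of the projection equals $|\{i:\sigma_i>\sigma\}|$, so it suffices to prove $\sigma\geq\sigma_{r+1}$, which forces every singular value from index $r+1$ onward to be thresholded to zero and hence yields $\rank\leq r$. To establish $\sigma\geq\sigma_{r+1}$ I would evaluate $\phi$ at $\sigma_{r+1}$: the terms with $i>r$ vanish (as $\sigma_i\leq\sigma_{r+1}$) while those with $i\leq r$ telescope, giving $\phi(\sigma_{r+1})=\sum_{i=1}^r\sigma_i-r\sigma_{r+1}$. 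The hypothesis $\sum_{i=1}^r\sigma_i\geq\tau+r\sigma_{r+1}$ is then precisely the inequality $\phi(\sigma_{r+1})\geq\tau=\phi(\sigma)$, and comparing these two values through the monotonicity of $\phi$ delivers $\sigma\geq\sigma_{r+1}$.

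The one point demanding genuine care is that last monotonicity step, since a nonincreasing function need not be injective. I would resolve it by first ruling out the degenerate case $\sigma_{r+1}=\sigma_1$: there the hypothesis collapses to $r\sigma_1\geq\tau+r\sigma_1$, i.e.\ $\tau\leq 0$, contradicting $\tau>0$; hence $\sigma_{r+1}<\sigma_1$. Thus both $\sigma$ and $\sigma_{r+1}$ lie in $[0,\sigma_1)$, where $\phi$ is \emph{strictly} decreasing (its right-derivative there equals $-|\{i:\sigma_i>s\}|\leq -1$), so $\phi(\sigma_{r+1})\geq\phi(\sigma)$ upgrades to $\sigma_{r+1}\leq\sigma$, completing the argument. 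Overall, the conceptually heavier ingredient is the singular-vector-preservation reduction of the first part via von Neumann's inequality; once the soft-thresholding form is in hand, the rank bound is a short, self-contained monotonicity computation and I expect no further obstacles.
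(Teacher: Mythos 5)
Your proposal is correct, and on the part the paper actually proves --- the rank bound --- it follows essentially the same route: both arguments reduce the claim to showing that the threshold $\sigma$ satisfies $\sigma \geq \sigma_{r+1}$, which zeroes out every singular value of index larger than $r$. The paper dispatches this as a ``simple observation,'' whereas you make it rigorous: you evaluate $\phi(s)=\sum_{i=1}^{k}\max\{0,\sigma_i-s\}$ at $s=\sigma_{r+1}$, note that the hypothesis reads exactly $\phi(\sigma_{r+1})\geq\tau=\phi(\sigma)$, rule out the degenerate case $\sigma_{r+1}=\sigma_1$ (which would force $\tau\leq 0$), and invoke strict monotonicity of $\phi$ on $[0,\sigma_1)$ --- precisely the justification the paper leaves implicit. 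The genuine difference is the first part: the paper cites the soft-thresholding formula as a well-known fact, while you prove it from scratch via unitary invariance, von Neumann's trace inequality (to pin down the singular vectors), and KKT for the resulting separable vector problem, a self-contained addition the paper does not attempt. One wording slip worth fixing: the vector problem you reduce to should be projection onto $\{\mathbf{s}\in\reals^k : \mathbf{s}\geq 0,\ \sum_i s_i\leq\tau\}$, not onto the halfspace $\{\mathbf{s}\in\reals^k : \sum_i s_i\leq\tau\}$ as written; projection onto the halfspace is a uniform shift of all coordinates and can produce negative entries, and it is precisely the constraints $s_i\geq 0$ that generate the $\max\{0,\cdot\}$ in your KKT solution. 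Since the KKT conclusion you state is the one for the correct set, this is a misstatement of the feasible region rather than a gap in the argument.
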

\begin{proof}
The first part of the lemma is a well-known fact, see for instance \cite{Beck17}.
The second part of the lemma comes from the simple observation, that if $\sum_{i=1}^r\sigma_i \geq \tau+r\sigma_{r+1}$ for some $r$, then $\sigma$, as defined in the lemma, must satisfy $\sigma \geq \sigma_{r+1}$, in which case Eq. \eqref{eq:traceBallProj} sets all bottom $(\min\{m,n\}-r)$ components of the SVD of $\X$ to zero, and hence the projection is of rank at most $r$. It is not hard to show using the same reasoning that the reversed direction also holds.
\end{proof}

The following lemma which connects between the singular value decomposition of an optimal solution and its corresponding gradient vector, will play an important technical role in our analysis. The proof of the lemma follows essentially from simple optimality conditions.

\begin{lemma}\label{lem:eigsOfOptGrad}
Let $\X^*\in\mX^*$ be any optimal solution and write its singular value decomposition as $\X^* = \sum_{i=1}^r\sigma_i\u_i\v_i^{\top}$. Then, the gradient vector $\nabla{}f(\X^*)$ admits a singular-value decomposition such that the set of pairs of vectors $\{(-\u_i,\v_i)\}_{i=1}^r$ is a set of top singular-vector pairs of $\nabla{}f(\X^*)$ which corresponds to the largest singular value $\sigma_1(\nabla{}f(\X^*))$.
\end{lemma}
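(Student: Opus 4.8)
The plan is to derive everything from the first-order optimality condition for the convex problem \eqref{eq:optProb}. Writing $\G := \nabla{}f(\X^*)$, optimality of $\X^*$ over the convex feasible set means that $\X^*$ minimizes the linear functional $\X\mapsto\G\bullet\X$ over the unit trace-norm ball; that is, $\G\bullet(\X-\X^*)\geq 0$ for every $\X$ with $\Vert\X\Vert_*\leq 1$. I would first dispose of the degenerate case $\G=0$, in which the statement is vacuous since every orthonormal family of singular-vector pairs is a valid top family for the zero matrix. Otherwise, were $\X^*$ interior to the ball the optimality inequality would force $\G=0$; hence $\X^*$ lies on the boundary, so $\Vert\X^*\Vert_* = \sum_{i=1}^r\sigma_i = 1$.

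Next I would invoke the duality between the trace-norm and the spectral norm, namely $\max_{\Vert\X\Vert_*\leq 1}\A\bullet\X = \sigma_1(\A)$. Applied to $-\G$ this gives $\min_{\Vert\X\Vert_*\leq 1}\G\bullet\X = -\sigma_1(\G)$, so optimality forces $\G\bullet\X^* = -\sigma_1(\G)$. Expanding through the SVD of $\X^*$ yields $\G\bullet\X^* = \sum_{i=1}^r\sigma_i(\u_i^{\top}\G\v_i)$, while for each $i$ the unit vectors $\u_i,\v_i$ obey the elementary bound $\u_i^{\top}\G\v_i \geq -\sigma_1(\G)$.

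The crux is an equality/averaging step: since $\sum_i\sigma_i = 1$ with every $\sigma_i>0$, combining $\sum_i\sigma_i(\u_i^{\top}\G\v_i) = -\sigma_1(\G)$ with the term-wise lower bound forces $\u_i^{\top}\G\v_i = -\sigma_1(\G)$ for every $i\in[r]$. I would then convert this term-wise equality into the desired spectral statement via the equality case of Cauchy--Schwarz: from $(-\u_i)^{\top}\G\v_i = \sigma_1(\G)$ together with $\Vert\u_i\Vert = \Vert\v_i\Vert = 1$ one obtains $\Vert\G\v_i\Vert = \sigma_1(\G)$ (so $\v_i$ is a top right singular vector of $\G$) and $\G\v_i = \sigma_1(\G)(-\u_i)$, whence also $\G^{\top}(-\u_i) = \sigma_1(\G)\v_i$. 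Thus each $(-\u_i,\v_i)$ is a singular-vector pair of $\G$ associated with its largest singular value $\sigma_1(\G)$.

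Finally, since the $\u_i$ (resp.\ the $\v_i$) are orthonormal as singular vectors of $\X^*$, the family $\{(-\u_i,\v_i)\}_{i=1}^r$ is an orthonormal family of top singular-vector pairs of $\G$, and I would complete it to a full SVD of $\G$ in the standard way. I expect the main obstacle to be this equality step and its conversion into a genuine singular-vector characterization: one must check that term-wise extremality really pins down both the left and right singular vectors (and the correct sign), not merely the value of the quadratic form, and the division by $\sigma_1(\G)$ in that step is precisely why the case $\G=0$ has to be split off at the outset.
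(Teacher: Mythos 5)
Your proof is correct, and it takes a genuinely different route from the paper's. The paper argues by contradiction with an explicit perturbation: assuming $-\u_i^{\top}\nabla{}f(\X^*)\v_i < \sigma_1(\nabla{}f(\X^*))$ for some $i$, it picks a top singular pair $(\u,\v)$ of the gradient and verifies that $\X_{\alpha} := \X^* + \alpha(-\u\v^{\top}-\u_i\v_i^{\top})$ is feasible for $\alpha\in(0,\sigma_i]$, so that $(\X_{\alpha}-\X^*)\bullet\nabla{}f(\X^*) < 0$ contradicts first-order optimality; note that this swap construction uses only the triangle inequality and never needs $\Vert{\X^*}\Vert_*=1$. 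You instead proceed directly: the dual-norm identity $\min_{\Vert{\X}\Vert_*\leq 1}\G\bullet\X = -\sigma_1(\G)$ fixes the value $\G\bullet\X^* = -\sigma_1(\G)$, and your averaging step (valid precisely because $\sum_i\sigma_i = 1$ with every $\sigma_i>0$) forces the term-wise equalities $\u_i^{\top}\G\v_i = -\sigma_1(\G)$; this is exactly why your argument must first establish that the constraint is active at $\X^*$, a step the paper's route avoids entirely. As for what each approach buys: the paper's perturbation is self-contained (no dual-norm formula needed) and localizes the contradiction to a single offending component, while your duality argument is shorter, makes the role of the active constraint explicit, and transfers essentially verbatim to the spectrahedron setting of Lemma \ref{lem:eigsOfOptGradSDP}, whose proof the paper omits as ``the same reasoning''. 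Your writeup is also more complete on one point the paper glosses over: both proofs reduce the lemma to the scalar identities $-\u_i^{\top}\G\v_i = \sigma_1(\G)$, but you spell out, via the equality case of Cauchy--Schwarz, why these identities genuinely make $(-\u_i,\v_i)$ a singular-vector pair (giving $\G\v_i = -\sigma_1(\G)\u_i$ and $\G^{\top}\u_i = -\sigma_1(\G)\v_i$) and why orthonormality of the $\u_i$ and $\v_i$ lets the family be completed to a full SVD, whereas the paper simply asserts that proving the scalar identities ``suffices''.
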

\begin{proof}
First, note that if $\nabla{}f(\X^*) = \mathbf{0}$ then the claim holds trivially. Thus, henceforth we consider the case $\nabla{}f(\X^*)\neq\mathbf{0}$.

It suffices to show that for all $i\in\{1,\dots,r\}$ it holds that $-\u_i^{\top}\nabla{}f(\X^*)\v_i = \sigma_1(\nabla{}f(\X^*))$. 

Assume by contradiction that for some $i\in\{1,\dots,r\}$ it holds that  $-\u_i^{\top}\nabla{}f(\X^*)\v_i < \sigma_1(\nabla{}f(\X^*))$. Let $\u,\v$ denote a singular vector pair corresponding to the top singular value $\sigma_1(\nabla{}f(\X^*))$. Observe that for all $\alpha\in(0,\sigma_i]$, the point $\X_{\alpha}:= \X^* + \alpha(-\u\v^{\top}-\u_i\v_i^{\top})$ is a feasible solution to Problem \eqref{eq:optProb}, i.e., $\Vert{\X_{\alpha}}\Vert_*\leq 1$. Moreover, it holds that
\begin{eqnarray*}
(\X_{\alpha}-\X^*)\bullet\nabla{}f(\X^*) &=& \alpha(-\u\v^{\top}-\u_i\v_i^{\top})\bullet\nabla{}f(\X^*) \\
&<& \alpha(-\sigma_1(\nabla{}f(\X^*)) + \sigma_1(\nabla{}f(\X^*))) = 0,
\end{eqnarray*}
which clearly contradicts the optimality of $\X^*$.
\end{proof}


\begin{corollary}\label{cor:lowRankOpt}
For any $\X^*\in\mX^*$ it holds that $\rank(\X^*) \leq \#\sigma_1(\nabla{}f(\X^*))$. Moreover, if $\nabla{}f$ is non-zero over $\mX^*$,
it holds that
\begin{eqnarray}\label{eq:maxmin}
\max\{\rank(\X) ~ |~\X\in\mX^*\} \leq \min\{\#\sigma_1(\nabla{}f(\Y)) ~ |~\Y\in\mX^*\}.
\end{eqnarray}
\end{corollary}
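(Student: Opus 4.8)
The plan is to treat the two claims separately, deriving the first directly from Lemma \ref{lem:eigsOfOptGrad} and the second from the additional observation that $\nabla f$ is constant over the optimal set $\mX^*$.

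For the first inequality, fix $\X^*\in\mX^*$ and write $\X^* = \sum_{i=1}^r\sigma_i\u_i\v_i^{\top}$ with $r=\rank(\X^*)$, so that $\{\u_i\}_{i=1}^r$ and $\{\v_i\}_{i=1}^r$ are each orthonormal. If $\nabla f(\X^*)=\mathbf{0}$ the claim is trivial, since then $\#\sigma_1(\nabla f(\X^*)) = \min\{m,n\}\geq r$. Otherwise, Lemma \ref{lem:eigsOfOptGrad} supplies an SVD of $\nabla f(\X^*)$ in which $\{(-\u_i,\v_i)\}_{i=1}^r$ is a set of top singular-vector pairs, all associated with the single value $\sigma_1(\nabla f(\X^*))$. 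Since these $r$ pairs are orthonormal, they exhibit an $r$-dimensional space of top singular vectors, whence the multiplicity of the largest singular value is at least $r$; that is, $\rank(\X^*)=r\leq\#\sigma_1(\nabla f(\X^*))$.

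For the second inequality the key step is to show that the gradient is the same matrix at every optimal point. I would first note that $\mX^*$ is convex, being the minimizer set of a convex function over a convex set. Take $\X_1,\X_2\in\mX^*$. First-order optimality of each point over the trace-norm ball yields $\nabla f(\X_1)\bullet(\X_2-\X_1)\geq 0$ and $\nabla f(\X_2)\bullet(\X_1-\X_2)\geq 0$; adding these gives $\big(\nabla f(\X_1)-\nabla f(\X_2)\big)\bullet(\X_1-\X_2)\leq 0$. On the other hand, $\beta$-smoothness together with convexity implies the co-coercivity inequality $\big(\nabla f(\X_1)-\nabla f(\X_2)\big)\bullet(\X_1-\X_2)\geq\frac{1}{\beta}\Vert\nabla f(\X_1)-\nabla f(\X_2)\Vert_F^2$. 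Combining the two bounds forces $\Vert\nabla f(\X_1)-\nabla f(\X_2)\Vert_F=0$, i.e., $\nabla f(\X_1)=\nabla f(\X_2)$.

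With the gradient constant over $\mX^*$, the quantity $\#\sigma_1(\nabla f(\Y))$ takes a single value as $\Y$ ranges over $\mX^*$, so its minimum equals this common value. Applying the first inequality to every $\X\in\mX^*$ and taking the maximum over the ranks then yields $\max\{\rank(\X)~|~\X\in\mX^*\}\leq\#\sigma_1(\nabla f(\Y))=\min\{\#\sigma_1(\nabla f(\Y))~|~\Y\in\mX^*\}$, which is \eqref{eq:maxmin}. I expect the only delicate point to be the constancy of the gradient: it is exactly here that the $\beta$-smoothness assumption enters (through co-coercivity), since for a merely differentiable convex function the first-order conditions alone pin down only the directional derivative along the segment $[\X_1,\X_2]$, not the full gradient.
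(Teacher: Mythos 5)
Your proof is correct, and its first half coincides with the paper's: both read $\rank(\X^*)\leq\#\sigma_1(\nabla{}f(\X^*))$ directly off Lemma \ref{lem:eigsOfOptGrad}, using that the $r$ orthonormal pairs $(-\u_i,\v_i)$ force multiplicity at least $r$. For the inequality \eqref{eq:maxmin}, however, you take a genuinely different route. The paper argues by contradiction inside the trace-norm geometry: assuming $\rank(\X_2^*)>\#\sigma_1(\nabla{}f(\X_1^*))$ for two optimal points, it uses Lemma \ref{lem:eigsOfOptGrad} together with $\Vert{\X_1^*}\Vert_*=1$ (here the nonzero-gradient assumption enters) to get $\X_1^*\bullet\nabla{}f(\X_1^*)=-\sigma_1(\nabla{}f(\X_1^*))$, observes that any feasible matrix whose rank exceeds the multiplicity must have \emph{strictly} larger inner product with $\nabla{}f(\X_1^*)$, and then contradicts optimality via the gradient inequality for convex $f$. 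You instead prove that $\nabla{}f$ is constant on $\mX^*$ -- adding the two variational inequalities and invoking co-coercivity of gradients of smooth convex functions -- which collapses the right-hand side of \eqref{eq:maxmin} to a single number and reduces everything to the first part. Your route buys generality and structure: it works verbatim for any convex feasible set, does not actually need the assumption that $\nabla{}f$ is nonzero on $\mX^*$, and yields the stronger conclusion that $\#\sigma_1(\nabla{}f(\cdot))$ is literally constant over $\mX^*$ (so the minimum in \eqref{eq:maxmin} equals the maximum). The paper's route is self-contained, uses no machinery beyond Lemma \ref{lem:eigsOfOptGrad} and convexity, and directly exhibits the rank-versus-multiplicity tension that drives the rest of the paper. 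One small caveat: your closing remark that $\beta$-smoothness is what makes gradient constancy possible is overstated; by a classical argument (Mangasarian), the gradient of a merely differentiable convex function is already constant on the solution set of a convex program -- one compares values of $f$ at points slightly off the optimal segment rather than only along it. Since the paper assumes $\beta$-smoothness throughout, your use of co-coercivity is nonetheless perfectly legitimate.
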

\begin{proof}
Lemma \ref{lem:eigsOfOptGrad}  directly implies that for all $\X^*\in\mX^*$ it holds that $\rank(\X^*) \leq \#\sigma_1(\nabla{}f(\X^*))$. 

For the second part of the lemma,  suppose there exist $\X_1^*,\X_2^*\in\mX^*$ such that $\rank(\X_2^*) > \#\sigma_1(\nabla{}f(\X_1^*))$. 

Since $\nabla{}f(\X_1^*) \neq \mathbf{0}$, it follows from simple optimality conditions that $\Vert{\X_1^*}\Vert_* = 1$, which together with Lemma \ref{lem:eigsOfOptGrad}  implies that $\X_1^*\bullet\nabla{}f(\X_1^*) = -\sigma_1(\nabla{}f(\X^*))$. Moreover, since $\rank(\X_2^*) > \#\sigma_1(\nabla{}f(\X_1^*))$ and $\Vert{\X_2^*}\Vert_*\leq 1$, it follows that $\X_2^*\bullet\nabla{}f(\X_1^*) > -\sigma_1(\X_1^*)$.

Thus, using again the convexity of $f$ we have that
\begin{eqnarray*}
f(\X_1^*) - f(\X_2^*) \leq (\X_1^* - \X_2^*)\bullet\nabla{}f(\X_1^*) < -\sigma_1(\nabla{}f(\X_1^*)) + \sigma_1(\nabla{}f(\X_1^*)) = 0
\end{eqnarray*}
and hence we arrive at a contradiction.
\end{proof}

One may wonder if the reversed inequality to \eqref{eq:maxmin} holds (i.e., the inequality holds with equality). The following simple example shows that in general the inequality can be strict. Consider the following example.
\begin{eqnarray*}
\min_{\Vert{\X}\Vert_*\leq 1}\{f(\X) := \frac{1}{2}\Vert{\X-\A}\Vert_F^2\}, \qquad \A = \textrm{diag}(1+\sigma, \sigma, \dots, \sigma)\in\reals^{m\times n},
\end{eqnarray*}
for some $\sigma > 0$.

Clearly, using Lemma \ref{lem:traceNormProj}, the problem admits a unique optimal rank-one solution solution $\X^* = \matE_{1,1}$, where $\matE_{1,1}$ denotes the $m\times n$ diagonal matrix with only the first entry along the main diagonal is non-zero and equal to 1. However, one can easily observe that $\nabla{}f(\X^*) = \textrm{diag}(-\sigma,\dots,-\sigma)$, meaning $\#\sigma_1(\nabla{}f(\X^*)) = \min\{m,n\}$.

While the above example demonstrates that in general it is possible that $\#\sigma_1(\nabla{}f(\X^*)) >> \rank(\X^*)$ and that, as a result, Proposition \ref{prop:main} may not imply significant computational benefits for Problem \eqref{eq:optProb}, the following lemma shows that such cases always imply that the optimization problem \eqref{eq:optProb} is ill-posed in the following sense: increasing the radius of the trace-norm ball by an arbitrary small amount will cause the projected gradient mapping to map such original low-rank solution to a higher rank matrix, implying  certain instability of low-rank optimal solutions.

\begin{lemma}[gap necessary for stability of rank of optimal solutions]
Suppose there exists $\X^*\in\mX^*$ of rank $r^*$ such that $\nabla{}f(\X^*)\neq 0$, and suppose that $\#\sigma_1(\nabla{}f^*) > r^*$. Then, for any step-size $\eta >0$ and for any $\epsilon$ small enough, it holds that the projected-gradient mapping  at $\X^*$ w.r.t. the trace-norm ball of radius $1+\epsilon$ satisfies 
\begin{eqnarray*}
\rank\left({\projgrad_{\eta,1+\epsilon}(\X^*)}\right) \geq \#\sigma_1(\nabla{}f(\X^*)).
\end{eqnarray*}
\end{lemma}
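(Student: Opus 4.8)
The plan is to explicitly diagonalize the matrix $\Y := \X^*-\eta\nabla{}f(\X^*)$ whose shrinkage under $\Pi_{\Vert\cdot\Vert_*\le 1+\epsilon}$ defines $\projgrad_{\eta,1+\epsilon}(\X^*)$, and then track exactly how the soft-thresholding of Lemma \ref{lem:traceNormProj} acts on its singular values. First I would write $\X^* = \sum_{i=1}^{r^*}\sigma_i\u_i\v_i^{\top}$ with $r^*=\rank(\X^*)$, and invoke Lemma \ref{lem:eigsOfOptGrad} to fix a compatible SVD of the gradient: writing $g_1:=\sigma_1(\nabla{}f(\X^*))$ and $k:=\#\sigma_1(\nabla{}f(\X^*))$, the pairs $\{(-\u_i,\v_i)\}_{i=1}^{r^*}$ lie among the $k$ top singular pairs of $\nabla{}f(\X^*)$, so that $\nabla{}f(\X^*) = -g_1\sum_{i=1}^{r^*}\u_i\v_i^{\top} + g_1\sum_{i=r^*+1}^{k}\w_i\z_i^{\top} + \sum_{i>k}g_i\a_i\b_i^{\top}$ for orthonormal families extending $\{\u_i\},\{\v_i\}$, with $g_i<g_1$ for $i>k$. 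Since $\nabla{}f(\X^*)\neq\mathbf{0}$, the interior first-order condition forces $\Vert{\X^*}\Vert_*=\sum_{i=1}^{r^*}\sigma_i=1$, exactly as already used in the proof of Corollary \ref{cor:lowRankOpt}.

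The key computation is to read off the SVD of $\Y$ from this common coordinate system. Because $\X^*$ and $\nabla{}f(\X^*)$ share their left/right vectors on the top-$r^*$ block, while the extra gradient vectors $\w_i,\z_i,\a_i,\b_i$ are orthogonal to the $\u_j,\v_j$, one obtains $\Y = \sum_{i=1}^{r^*}(\sigma_i+\eta g_1)\u_i\v_i^{\top} - \eta g_1\sum_{i=r^*+1}^{k}\w_i\z_i^{\top} - \eta\sum_{i>k}g_i\a_i\b_i^{\top}$, which is already a signed SVD since the left and the right vector families are each orthonormal. Hence the singular values of $\Y$, in decreasing order, are $\sigma_i+\eta g_1$ for $i\le r^*$ (each strictly above $\eta g_1$ as $\sigma_i>0$), then the value $\eta g_1$ with multiplicity $k-r^*$, and finally the values $\eta g_i<\eta g_1$ for $i>k$; in particular $\Vert{\Y}\Vert_* = 1+k\eta g_1+\eta\sum_{i>k}g_i$.

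Finally I would apply Lemma \ref{lem:traceNormProj} with $\tau=1+\epsilon$. For $\epsilon$ small, $\Vert{\Y}\Vert_*\ge 1+k\eta g_1 > 1+\epsilon$, so the projection is the soft-thresholding with the unique shift $\sigma\ge 0$ solving $\sum_i\max\{0,\sigma_i(\Y)-\sigma\}=1+\epsilon$. Evaluating the left-hand side at $\sigma=\eta g_1$ yields exactly $\sum_{i=1}^{r^*}\sigma_i=1$ — this is just the fixed-point identity $\projgrad_{\eta,1}(\X^*)=\X^*$ of rank $r^*$ — and since $\sigma\mapsto\sum_i\max\{0,\sigma_i(\Y)-\sigma\}$ is continuous and, just below $\eta g_1$, strictly decreasing with one-sided slope $-k$, the solution for budget $1+\epsilon>1$ is $\sigma=\eta g_1-\epsilon/k$, strictly below $\eta g_1$. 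Using the spectral gap $g_1-g_{k+1}>0$ — positive precisely because $k$ is the full multiplicity of the top singular value — one checks that for $\epsilon$ small enough $\sigma$ stays above $\eta g_{k+1}$, so the singular values exceeding $\sigma$ are exactly the top $k$, giving $\rank(\projgrad_{\eta,1+\epsilon}(\X^*))=k=\#\sigma_1(\nabla{}f(\X^*))$, which proves the claim.

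I anticipate the main obstacle to be the bookkeeping that produces the explicit SVD of $\Y$: one must argue that negating and reindexing the gradient's singular vectors preserves orthonormality of the combined left and right families, so that the displayed expression is genuinely a singular value decomposition, and one must treat the degenerate block of repeated singular value $\eta g_1$ (of multiplicity $k-r^*$) with care, since it is exactly this block that survives the thresholding and lifts the rank from $r^*$ to $k$. Once the spectrum of $\Y$ is pinned down, the remaining ``for $\epsilon$ small enough'' quantifier is a routine consequence of the strict gap $g_1-g_{k+1}>0$.
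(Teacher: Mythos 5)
Your proposal is correct and follows essentially the same route as the paper: both use Lemma \ref{lem:eigsOfOptGrad} to pin down the spectrum of $\Y^* = \X^*-\eta\nabla f(\X^*)$ (top block $\sigma_i(\X^*)+\eta\mu_1$, then $\eta\mu_1$ with multiplicity $k-r^*$, then $\eta\mu_i$), and then apply Lemma \ref{lem:traceNormProj} with the observation that the threshold $\sigma=\eta\mu_1$ exhausts only budget $1<1+\epsilon$, forcing the actual threshold strictly below $\eta\mu_1$ so all $k$ top components survive. Your extra bookkeeping (the explicit formula $\sigma=\eta\mu_1-\epsilon/k$ and the resulting rank \emph{equality} for $\epsilon\leq k\eta(\mu_1-\mu_{k+1})$) is a valid minor strengthening but not needed for the stated inequality.
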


\begin{proof}
Fix some $\X^*\in\mX^*$ and denote $\Y^* := \X^* - \eta\nabla{}f(\X^*)$. Using Lemma \ref{lem:eigsOfOptGrad} we have that the singular values of $\Y^*$ are given by 
\begin{eqnarray*}
\forall i\in[r^*]: \quad \sigma_i &=& \sigma_i(\X^*) + \eta\mu_1; \\
 \forall j>r^*: \quad \sigma_j &=& \eta\mu_j,
\end{eqnarray*}

where $\{\mu_i\}_{i\in[\min\{m,n\}]}$ are the singular values of $\nabla{}f(\X^*)$.
Since $\nabla{}f(\X^*) \neq 0$, which implies that $\Vert{\X^*}\Vert_*=1$, it holds that $\Vert{\Y^*}\Vert_* > 1 $. Let $\epsilon>0$ be such that $\Vert{\Y^*}\Vert_* \geq 1+ \epsilon$. Then, by Lemma \ref{lem:traceNormProj}, we have that the projected-gradient mapping w.r.t. the trace-norm ball of radius $(1+\epsilon)$ satisfies:
\begin{eqnarray*}
\projgrad_{\eta,1+\epsilon}(\X^*)=\Pi_{\Vert{\cdot}\Vert_* \leq 1+\epsilon}[\Y^*] = \sum_{i=1}^{\min\{m,n\}}\max\{0,\sigma_i-\sigma\}\u_i\v_i^{\top},
\end{eqnarray*}

where $\sigma$ satisfies: $\sum_{i=1}^{\min\{m,n\}}\max\{0, \sigma_i-\sigma\} = 1+\epsilon$. Observe that for $\sigma = \eta\mu_1$, we have that
\begin{eqnarray*}
\sum_{i=1}^{\min\{m,n\}}\max\{0, \sigma_i-\sigma\} = \sum_{i=1}^{r^*}\sigma_i(\X^*)= 1 < 1+ \epsilon.
\end{eqnarray*}
Thus, it must hold that $\sigma < \eta\mu_1$. However, then it follows that for all $i\in[\#\sigma_1(\nabla{}f(\X^*))]$, $\sigma_i-\sigma >0$ and thus, $\rank\left({\Pi_{\Vert{\cdot}\Vert_*\leq 1+\epsilon}[\Y^*]}\right) \geq \#\sigma_1(\nabla{}f^*)$.
\end{proof}


The following lemma demonstrates why setting the rank of the truncated-SVD projection to be at least $\#\sigma_1(\nabla{}f(\X^*))$ is necessary. The lemma shows that in general, a result similar in spirit to Proposition \ref{prop:main} may not hold with SVD rank parameter $r$ satisfying $r  < \#\sigma_1(\nabla{}f(\X^*))$.

\begin{lemma}\label{lem:rankLB}
Fix a positive integer $n$ and $r\in\{2,\dots,n-1\}$. Then, for any $a\in(0,1)$ small enough and for any $\sigma >0$, there exists a convex and $1$-smooth function $f:\reals^{n\times n}\rightarrow\reals$ such that
\begin{enumerate}
\item
$f$ admits a rank-$r$ minimizer over the unit trace-norm ball $\X^*$ for which it holds that $\#\sigma_1(\nabla{}f(\X^*)) = r+1$ and the spectral gap is $\sigma_1(\nabla{}f(\X^*))-\sigma_{r+2}(\nabla{}f(\X^*)) = \sigma$,
\item
there exists a matrix $\X_a$ such that $\Vert{\X_a}\Vert_*\leq 1$, $\rank(\X_a) = r$, $\Vert{\X_a-\X^*}\Vert_F \leq \sqrt{2a}$, and for any $\eta\in(0,1]$ it holds that $\rank(\projgrad_{\eta}(\X_a))  = r+1$.
\end{enumerate}

\end{lemma}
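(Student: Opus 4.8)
The plan is to exhibit an explicit quadratic objective and a diagonal target matrix for which everything can be computed in closed form via Lemma \ref{lem:traceNormProj}. I would take $f(\X)=\frac{\rho}{2}\Vert\X-\A\Vert_F^2$ for a fixed $\rho\in(0,1)$ and a diagonal $\A=\mathrm{diag}(a_1,\dots,a_n)$ with $a_1=\dots=a_r=s+1/r$, $a_{r+1}=s$, and $a_{r+2}=\dots=a_n=s-\sigma/\rho$, where $s\ge\sigma/\rho$ is chosen large enough to keep these entries nonnegative. This $f$ is convex and $\rho$-smooth, hence $1$-smooth. Its minimizer over the ball is $\X^*=\Pi_{\Vert\cdot\Vert_*\le1}[\A]$; applying Lemma \ref{lem:traceNormProj} with threshold $s$ gives $\X^*=\mathrm{diag}(1/r,\dots,1/r,0,\dots,0)$, of rank $r$. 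Since $\nabla f(\X^*)=\rho(\X^*-\A)$ is diagonal with entries $-\rho s$ on coordinates $1,\dots,r+1$ and $-\rho(s-\sigma/\rho)$ below, its top singular value $\rho s$ has multiplicity exactly $r+1$ and the gap is $\rho s-\rho(s-\sigma/\rho)=\sigma$, which settles item 1.

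For item 2, the key idea is to perturb $\X^*$ not along a coordinate but by \emph{tilting} one singular direction into the $(r+1)$-st coordinate. Concretely, I keep the first $r-1$ diagonal entries at $1/r$ and place on the $2\times2$ block indexed by $\{r,r+1\}$ the rank-one matrix $\frac1r\w\w^{\top}$ with $\w=\cos\phi\,\e_r+\sin\phi\,\e_{r+1}$ and $\sin^2\phi=ar^2$ (valid for $a$ small enough). Then $\rank(\X_a)=r$, $\Vert\X_a\Vert_*=1$, and a direct Frobenius computation gives $\Vert\X_a-\X^*\Vert_F=\sqrt2\,|\sin\phi|/r=\sqrt{2a}$. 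For any $\eta\in(0,1]$ the map $\projgrad_\eta(\X_a)=\Pi_{\Vert\cdot\Vert_*\le1}[(1-\eta\rho)\X_a+\eta\rho\A]$ acts on a block-diagonal matrix, so I can read off its spectrum: the $r-1$ background coordinates contribute $1/r+\eta\rho s$, the $\{r,r+1\}$ block contributes two eigenvalues $\nu_1\ge\nu_2$, and coordinates $r+2,\dots,n$ contribute $\eta\rho(s-\sigma/\rho)$. A short trace-budget computation shows the projection threshold from Lemma \ref{lem:traceNormProj} is exactly $\theta=\eta\rho s$. Writing the block as $\eta\rho s\,\I+N'$, one finds $\mathrm{Tr}(N')=1/r>0$ and $\det N'=(1-\eta\rho)\eta\rho\sin^2\phi/r^2>0$, so $\nu_2=\eta\rho s+\lambda_{\min}(N')>\theta$, whereas $\eta\rho(s-\sigma/\rho)=\theta-\eta\sigma<\theta$. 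Hence exactly the $r-1$ background directions and both block eigenvalues survive the projection, giving $\rank(\projgrad_\eta(\X_a))=r+1$.

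The step I expect to be the main obstacle — and the reason the construction needs $\rho<1$ rather than the plain squared distance — is the endpoint $\eta=1$. If $\rho=1$ then $\projgrad_1(\X_a)=\Pi_{\Vert\cdot\Vert_*\le1}[\A]=\X^*$ for \emph{every} $\X_a$, which has rank $r$, so the claim would fail at $\eta=1$; taking $\rho<1$ keeps $1-\eta\rho>0$ throughout $(0,1]$, which is exactly what makes $\det N'>0$ and hence preserves the rank-$(r+1)$ conclusion uniformly in $\eta$. The second delicate point is that the perturbation must genuinely break the borderline structure at $\X^*$: perturbing $\X^*$ within its own support (e.g.\ reallocating mass among the first $r$ coordinates) leaves the $(r+1)$-st spectral component of the gradient step exactly at the threshold and yields rank $r$, so one really needs the off-diagonal tilt to push $\lambda_{\min}(N')$ strictly positive. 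Verifying that the threshold equals $\eta\rho s$ independently of $\phi$ is the computational heart that makes the two comparisons $\nu_2>\theta>\eta\rho(s-\sigma/\rho)$ clean.
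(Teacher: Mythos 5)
Your proposal is correct — I verified the key identities: the minimizer is indeed $\X^*=\mathrm{diag}(1/r,\dots,1/r,0,\dots,0)$ with threshold $s$; the gradient at $\X^*$ has top singular value $\rho s$ with multiplicity exactly $r+1$ and gap $\sigma$; for the tilted point, $\mathrm{Tr}(N')=1/r$ and $\det N'=(1-\eta\rho)\eta\rho\sin^2\phi/r^2>0$, so the trace budget $(r-1)/r+\mathrm{Tr}(N')=1$ pins the threshold at $\eta\rho s$ and both block eigenvalues strictly exceed it, giving rank exactly $r+1$. The route, however, differs from the paper's in three respects worth noting. First, the paper uses an \emph{anisotropic} quadratic that penalizes only the diagonal coordinates $1,\dots,r+1$, with the coefficient $1/2$ inside the $(r+1)$-st term playing the same role your global scaling $\rho<1$ plays: both prevent the gradient step from annihilating the perturbation at the endpoint $\eta=1$ (your observation that the plain squared distance fails there is exactly right, and is the reason for the paper's lopsided term). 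Second, the paper's minimizer carries \emph{unequal} masses with a deliberately small mass $a$ on coordinate $r$, so its bad point can stay diagonal — it simply moves that mass to coordinate $r+1$ while keeping rank $r$ — whereas your equal-mass minimizer forces the off-diagonal tilt (as you note, no diagonal rank-$r$ matrix near your $\X^*$ can place mass on coordinate $r+1$); the paper's choice keeps every computation scalar, while yours requires the $2\times 2$ eigenvalue argument but yields a single function $f$ independent of $a$, which is slightly stronger than the statement requires. Third, the rank conclusion is reached differently: the paper argues by contradiction, case-splitting on which of two candidate smallest singular values of $\Y_a$ is smaller and showing the trace condition of Lemma \ref{lem:traceNormProj} cannot be met with rank $\leq r$, whereas your proof is direct — identify the threshold exactly and check which spectral components survive. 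Both are sound; yours is arguably cleaner since it avoids the case analysis, at the cost of the block computation.
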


\begin{proof}
Consider the following function $f:\reals^{n\times n}\rightarrow\reals$.
\begin{eqnarray*}
f(\X) := \frac{1}{2}\sum_{i=1}^r\left({\X\bullet\matE_{i,i} - (\lambda_i+\sigma)}\right)^2 + \frac{1}{2}\left({\X\bullet\frac{1}{2}\matE_{r+1,r+1}-\sigma}\right)^2,
\end{eqnarray*}

where $\matE_{i,i}$ denotes the indicator for the $i$th diagonal entry. Note that $f$ is indeed $1$-smooth.

We set values $\lambda_i = \frac{1-a}{r-1}, i=1,\dots,r-1$, $\lambda_r = a$.
It is not hard to verify that the rank-$r$ matrix
\begin{eqnarray*}
\X^* = \frac{1-a}{r-1}\sum_{i=1}^{r-1}\matE_{i,i} + a\matE_{r,r}
\end{eqnarray*}
is a minimizer of $f$ over the unit trace-norm ball. In particular, it holds that
\begin{eqnarray*}
\nabla{}f(\X^*) = -\sigma\sum_{i=1}^{r+1}\matE_{i,i}.
\end{eqnarray*}
Hence, we have $\#\sigma_1(\nabla{}f(\X^*)) =r+1 > \rank(\X^*)$.

Consider now the matrix $\X_a$ given by
\begin{eqnarray*}
\X_a = \frac{1-a}{r-1}\sum_{i=1}^{r-1}\matE_{i,i} + a\matE_{r+1,r+1}.
\end{eqnarray*}
Note that $\X_a$ is rank-$r$ as well. Clearly, it holds that $\Vert{\X^*-\X_a}\Vert_F = \sqrt{2}a$.

Also,
\begin{eqnarray*}
\nabla{}f(\X_a) = -\sigma\sum_{i=1}^{r-1}\matE_{i,i} - (a+\sigma)\matE_{r,r} + (\frac{a}{2}-\sigma)\matE_{r+1,r+1}.
\end{eqnarray*}
Thus, for any step-size $\eta\in(0,1]$ we have
\begin{eqnarray*}
\Y_a := \X_a-\eta\nabla{}f(\X_a) &=& \left({\frac{1-a}{r-1}+\eta\sigma}\right)\sum_{i=1}^{r-1}\matE_{i,i} +\eta(a+\sigma)\matE_{r,r} \\
&&+ (a - \eta\frac{a}{2} + \eta\sigma)\matE_{r+1,r+1}.
\end{eqnarray*}

Note that $\Y_a$ has $r+1$ positive singular values, which we denote (in non-increasing order) by $\gamma_1,\dots,\gamma_{r+1}$. In particular, for any $a \leq 1/r$ it holds that $\gamma_i = (1-a)/(r-1)+\eta\sigma$ for  $i=1,\dots,r-1$.

Note that $\sum_{i=1}^{r+1}\gamma_i > 1$. Thus, by Lemma \ref{lem:traceNormProj}, the singular values of $\projgrad_{\eta}(\X_a)$ are given by $\max\{\gamma_i-\gamma,0\},i=1,\dots,r+1$, where $\gamma >0$ satisfies
\begin{eqnarray*}
\sum_{i=1}^{r+1}\max\{0,\gamma_i-\gamma\} = 1.
\end{eqnarray*}

For $\rank(\projgrad_{\eta}(\X_a)) \leq r$ to hold, it must hold that $\gamma \geq \gamma_{r+1}$. We consider now two cases. 

In the first case we have $\eta(a+\sigma) \geq a(1-\eta/2)+\eta\sigma$, i.e., $\gamma_r = \eta(a+\sigma)$, $\gamma_{r+1} =  a(1-\eta/2)+\eta\sigma$. Then, for $\rank(\projgrad_{\eta}(\X_a)) \leq r$ to hold, it must hold that
\begin{eqnarray*}
1 = \sum_{i=1}^{r+1}\max\{0,\gamma_i-\gamma\} &\leq& \sum_{i=1}^{r+1}\max\{0,\gamma_i-a(1-\eta/2)-\eta\sigma\} \\
&<& (r-1)\frac{1-a}{r-1} + \eta{}a= 1,
\end{eqnarray*}
and hence we arrive at a contradiction.

In the second case we have $\eta(a+\sigma) < a(1-\eta/2)+\eta\sigma$, i.e., $\gamma_r = a(1-\eta/2)+\eta\sigma$, $\gamma_{r+1} =  \eta(a+\sigma)$. As in the first case, in order for $\rank(\projgrad_{\eta}(\X_a)) \leq r$ to hold, it must hold that
\begin{eqnarray*}
1 = \sum_{i=1}^{r+1}\max\{0,\gamma_i-\gamma\} &\leq& \sum_{i=1}^{r+1}\max\{0,\gamma_i-\eta(a+\sigma)\} \\
&<& (r-1)\frac{1-a}{r-1} + a - \eta\frac{3a}{2}< 1,
\end{eqnarray*}
and thus, in this case also we arrive at a contradiction.

We thus conclude that $\rank(\projgrad_{\eta}(\X_a)) =r+1 > r$.
\end{proof}

We now present and prove our main technical theorem which lower bounds $\delta(\X^*,\eta,r)$ - the radius of the ball around an optimal solution $\X^*$ inside which the projected gradient mapping $\projgrad_{\eta}(\cdot)$ has rank at most $r$, hence proving Proposition \ref{prop:main}.

\begin{theorem}\label{thm:goodProj}
Let $f:\reals^{m\times n}\rightarrow\reals$ be $\beta$-smooth and convex. Assume $\nabla{}f$ is non-zero over the unit trace-norm ball and fix some $\X^*\in\mX^*$. Let $r$ denote the multiplicity of $\sigma_1(\nabla{}f(\X^*))$, and let $\mu_1,\mu_2,\dots,\mu_{\min\{m,n\}}$ denote the singular values of $\nabla{}f(\X^*)$ (including multiplicities).  Then, for any $\eta >0$ it holds that
\begin{eqnarray}\label{eq:goodProjRes:1}
\delta(\X^*,\eta,r) \geq  \frac{\eta{}(\mu_1-\mu_{r+1})}{(1+1/\sqrt{r})(1+\eta\beta)}.
\end{eqnarray}


More generally, for any $\eta>0$ and $r'\in\{r,\dots,\min\{m,n\}-1\}$, it holds that 
\begin{eqnarray}\label{eq:goodProjRes:2}
\delta(\X^*,\eta,r') \geq  \frac{\eta(\mu_1-\mu_{r'+1})}{(1+1/\sqrt{r})(1+\eta\beta)}.
\end{eqnarray}


Moreover, for any $\eta >0$ and $r'\in\{r,\dots,\min\{m,n\}-r\}$, it holds that 
\begin{eqnarray}\label{eq:goodProjRes:3}
\delta(\X^*,\eta,r'+r-1) \geq  \frac{\sqrt{r}\eta(\mu_1-\mu_{r'+1})}{2(1+\eta\beta)}.
\end{eqnarray}

\end{theorem}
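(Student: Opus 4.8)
The plan is to reduce the rank condition to the sufficient condition of Lemma~\ref{lem:traceNormProj} applied to the shifted matrix, and then control the relevant singular values under perturbation. Throughout, write $\Y=\X-\eta\nabla f(\X)$ and $\Y^*=\X^*-\eta\nabla f(\X^*)$, and set $\mu_i=\sigma_i(\nabla f(\X^*))$. First I would record the spectral structure of $\Y^*$ coming from Lemma~\ref{lem:eigsOfOptGrad}: since the top singular-vector pairs of $\nabla f(\X^*)$ (all belonging to $\mu_1$, with multiplicity $r$) contain the pairs $\{(-\u_i,\v_i)\}$ of $\X^*$, the SVD of $\Y^*$ splits orthogonally, so that its top $r$ singular values are $\sigma_1(\X^*)+\eta\mu_1,\dots,\sigma_{\rank(\X^*)}(\X^*)+\eta\mu_1$ together with $r-\rank(\X^*)$ copies of $\eta\mu_1$, while $\sigma_{j}(\Y^*)=\eta\mu_{j}$ for every $j>r$. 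Using $\Vert\X^*\Vert_*=1$ (valid since $\nabla f\neq\mathbf{0}$) and $r\mu_1=\sum_{i=1}^r\mu_i$, this gives the two facts I will use: $\sum_{i=1}^r\sigma_i(\Y^*)=1+r\eta\mu_1$ and $\sigma_{r'+1}(\Y^*)=\eta\mu_{r'+1}$. Finally, $\beta$-smoothness and the triangle inequality give $\Vert\Y-\Y^*\Vert_F\le(1+\eta\beta)\Vert\X-\X^*\Vert_F$, so if $\Vert\X-\X^*\Vert_F\le\delta$ then $\rho:=\Vert\Y-\Y^*\Vert_F\le(1+\eta\beta)\delta$. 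The same ingredients prove \eqref{eq:goodProjRes:1}--\eqref{eq:goodProjRes:2}; I focus on \eqref{eq:goodProjRes:3}.

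For \eqref{eq:goodProjRes:3} I would invoke Lemma~\ref{lem:traceNormProj} at level $\ell=r'+r-1$: it suffices to show $\sum_{i=1}^{\ell}\sigma_i(\Y)-\ell\,\sigma_{\ell+1}(\Y)\ge1$ (this also forces $\Vert\Y\Vert_*\ge1$, so the lemma applies). Writing the left-hand side as $\sum_{i=1}^{\ell}\big(\sigma_i(\Y)-\sigma_{\ell+1}(\Y)\big)$ and discarding the nonnegative terms with index $i>r$, it is enough to prove $\sum_{i=1}^{r}\sigma_i(\Y)-r\,\sigma_{r'+r}(\Y)\ge1$. This reduction is the crucial step: it replaces the full Ky-Fan sum over $\ell$ terms by one over only the $r$ top terms, which is what keeps the perturbation cost from scaling with $r'$.

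Now I would control the two pieces under the perturbation $\rho$. For the sum, the Ky-Fan $r$-norm is a norm bounded by $\sqrt r$ times the Frobenius norm, so $\sum_{i=1}^{r}\sigma_i(\Y)\ge\sum_{i=1}^r\sigma_i(\Y^*)-\sqrt r\,\rho=1+r\eta\mu_1-\sqrt r\,\rho$. For the threshold term I would use the \emph{generalized} Weyl inequality $\sigma_{a+b-1}(\Y)\le\sigma_a(\Y^*)+\sigma_b(\Y-\Y^*)$ with $a=r'+1,\ b=r$, together with $\sigma_r(\Y-\Y^*)\le\Vert\Y-\Y^*\Vert_F/\sqrt r\le\rho/\sqrt r$, to obtain $\sigma_{r'+r}(\Y)\le\eta\mu_{r'+1}+\rho/\sqrt r$. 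Substituting both bounds yields $\sum_{i=1}^{r}\sigma_i(\Y)-r\,\sigma_{r'+r}(\Y)\ge1+r\eta(\mu_1-\mu_{r'+1})-2\sqrt r\,\rho$, so the required inequality holds as soon as $\rho\le\frac{\sqrt r}{2}\,\eta(\mu_1-\mu_{r'+1})$. Recalling $\rho\le(1+\eta\beta)\delta$ gives exactly \eqref{eq:goodProjRes:3}.

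The main obstacle is the threshold term: a naive Weyl bound $\sigma_{r'+r}(\Y)\le\sigma_{r'+r}(\Y^*)+\rho$ both forces the gap $\mu_1-\mu_{r'+r}$ (rather than $\mu_1-\mu_{r'+1}$) and costs $r\rho$, destroying the $\sqrt r$ gain. The two devices that save it --- discarding the nonnegative tail of the Ky-Fan sum so that only $r$ terms remain, and using the generalized Weyl inequality to ``jump'' $r-1$ positions at the cheap cost $\sigma_r(\Y-\Y^*)\le\rho/\sqrt r$ --- are exactly what convert the over-parameterization (allowing rank up to $r'+r-1$) into the improved radius. I would double-check that the orthogonal splitting of $\Y^*$ in the first step is legitimate (it is, since all singular vectors involved come from a single SVD of $\nabla f(\X^*)$) and that the claimed ordering $\sigma_{r'+1}(\Y^*)=\eta\mu_{r'+1}$ is preserved, which holds because $\mu_{r+1}<\mu_1$.
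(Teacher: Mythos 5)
Your proof is correct and takes essentially the same route as the paper's own: the same spectral characterization of $\Y^*=\X^*-\eta\nabla f(\X^*)$ via Lemma~\ref{lem:eigsOfOptGrad}, the same Ky Fan bound on $\sum_{i=1}^{r}\sigma_i(\Y)$, the same discarding of the nonnegative tail terms to reduce to a sum of $r$ terms, and the same generalized Weyl step $\sigma_{r'+r}(\Y)\le\sigma_{r'+1}(\Y^*)+\sigma_r(\Y-\Y^*)\le\eta\mu_{r'+1}+\Vert{\Y-\Y^*}\Vert_F/\sqrt{r}$, yielding identical constants in \eqref{eq:goodProjRes:1}--\eqref{eq:goodProjRes:3}. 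Your explicit verification that the sufficient condition forces $\Vert{\Y}\Vert_*\ge 1$ (so Lemma~\ref{lem:traceNormProj} applies) is a small point the paper leaves implicit, but otherwise the arguments coincide.
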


\begin{proof}
Throughout the proof we assume without loss of generality that $m \leq n$.  Fix a step-size $\eta>0$. 

Denote $\Y^* := \X^*-\eta\nabla{}f(\X^*)$ and let $\sigma_1,\dots,\sigma_m$ denote the singular values of $\Y^*$. Let us also denote by $\mu_1,\dots\mu_m$ the singular values of $\nabla{}f(\X^*)$, and  $r^* := \rank(\X^*)$.
From Lemma \ref{lem:eigsOfOptGrad} we can deduce that
\begin{eqnarray}\label{eq:goodProj:0}
\forall i\in[r^*]: \quad \sigma_i &=& \sigma_i(\X^*) + \eta\mu_1; \nonumber \\
 \forall j>r^*: \quad \sigma_j &=& \eta\mu_j.
\end{eqnarray}

For any integer $r'\in\{r,\dots,m\}$ let us define 
\begin{eqnarray*}
\xi(r') = \sum_{i=1}^{r}\sigma_i - r\cdot\sigma_{r'+1} -1.
\end{eqnarray*}

Since $\nabla{}f(\X^*)\neq\mathbf{0}$, it follows that $\sum_{i=1}^{r^*}\sigma_i(\X^*) =\sum_{i=1}^{r}\sigma_i(\X^*) = \Vert{\X^*}\Vert_*=1$, we have that

\begin{eqnarray}\label{eq:goodProj:1}
\xi(r') := \sum_{i=1}^{r}\sigma_i - r\cdot\sigma_{r'+1} -1 \underset{(a)}{=} 1+\eta{}r\mu_1  - r\eta\mu_{r'+1}  - 1= \eta{}r(\mu_1-\mu_{r'+1}),
\end{eqnarray}
where (a) follows from \eqref{eq:goodProj:0}.

Now, given some $\X\in\reals^{m\times n}$, denote $\Y := \X-\eta\nabla{}f(\X)$ and let $\gamma_1,\dots\gamma_m$ denote the singular values of $\Y$. It holds that

\begin{eqnarray}\label{eq:goodProj:2}
\sum_{i=1}^{r}\gamma_i &\underset{(a)}{\geq} &\sum_{i=1}^{r}\sigma_i - \sum_{i=1}^{r}\sigma_i(\Y-\Y^*) \geq \sum_{i=1}^{r}\sigma_i -\sqrt{r\sum_{i=1}^{r}\sigma_i^2(\Y-\Y^*)} \nonumber \\
&\geq& \sum_{i=1}^{r}\sigma_i -\sqrt{r\sum_{i=1}^{m}\sigma_i^2(\Y-\Y^*)} = \sum_{i=1}^{r}\sigma_i  -\sqrt{r}\Vert{\Y-\Y^*}\Vert_F \nonumber \\
&=& \sum_{i=1}^{r}\sigma_i - \sqrt{r}\Vert{\X-\eta\nabla{}f(\X) - \X^* + \eta\nabla{}f(\X^*)}\Vert_F \nonumber \\
&\geq& \sum_{i=1}^{r}\sigma_i - \sqrt{r}\left({\Vert{\X-\X^*}\Vert_F + \eta\Vert{\nabla{}f(\X) - \nabla{}f(\X^*)}\Vert_F}\right) \nonumber \\
&\underset{(b)}{\geq}& \sum_{i=1}^{r}\sigma_i - \sqrt{r}(1+\eta\beta)\Vert{\X-\X^*}\Vert_F,
\end{eqnarray}
where (a) follows from Ky Fan's inequality for the singular values, and (b) follows from the $\beta$-smoothness of $f(\cdot)$.

Also, similarly, using Weyl's inequality, it holds that
\begin{eqnarray}\label{eq:goodProj:3}
\gamma_{r'+1} \leq \sigma_{r'+1} + \sigma_1(\Y-\Y^*) \leq \sigma_{r'+1} + (1+\eta\beta)\Vert{\X-\X^*}\Vert_F.
\end{eqnarray}

Combining Eq. \eqref{eq:goodProj:1}, \eqref{eq:goodProj:2}, \eqref{eq:goodProj:3}, we have that
\begin{eqnarray}\label{eq:goodProj:4}
\sum_{i=1}^{r'}\gamma_i - r'\gamma_{r'+1} &\geq & \sum_{i=1}^r\gamma_i - r\gamma_{r'+1} \nonumber \\
& \geq &
\sum_{i=1}^{r}\sigma_i - \sqrt{r}(1+\eta\beta)\Vert{\X-\X^*}\Vert_F - r\left({\sigma_{r'+1} + (1+\eta\beta)\Vert{\X-\X^*}\Vert_F}\right) \nonumber\\
&=& \sum_{i=1}^{r}(\sigma_i-\sigma_{r'+1}) -(r+\sqrt{r})(1+\eta\beta)\Vert{\X-\X^*}\Vert_F \nonumber\\
&=& 1 + \xi(r') - (r+\sqrt{r})(1+\eta\beta)\Vert{\X-\X^*}\Vert_F.
\end{eqnarray}

Thus, it follows that if $\X$ satisfies:
\begin{eqnarray*}
\Vert{\X-\X^*}\Vert_F \leq  \frac{\xi(r')}{(r+\sqrt{r})(1+\eta\beta)} = \frac{\eta(\mu_1-\mu_{r'+1})}{(1+1/\sqrt{r})(1+\eta\beta)},
\end{eqnarray*}
we have that $\sum_{i=1}^{r'}\gamma_i - r'\gamma_{r'+1} \geq 1$, which implies via Lemma \ref{lem:traceNormProj} that $\rank\left({\projgrad_{\eta}(\X)}\right)\leq r$. This proves \eqref{eq:goodProjRes:1}, \eqref{eq:goodProjRes:2}.

Alternatively, for any $r'' \geq r' + r-1$, using the more general version of Weyl's inequality, we can replace Eq. \eqref{eq:goodProj:3} with
\begin{eqnarray}\label{eq:goodProj:6}
\gamma_{r''+1} &\leq & \sigma_{r'+1} + \sigma_{r''-r'+1}(\Y-\Y^*) = \sigma_{r'+1} + \sqrt{\sigma^2_{r''-r'+1}(\Y-\Y^*)} \nonumber \\
& \leq & \sigma_{r'+1} + \sqrt{\frac{1}{r''-r'+1}\Vert{\Y-\Y^*}\Vert_F^2} \nonumber \\
&\leq &\sigma_{r'+1} + \frac{1}{\sqrt{r''-r'+1}}(1+\eta\beta)\Vert{\X-\X^*}\Vert_F.
\end{eqnarray}

Thus, similarly to Eq. \eqref{eq:goodProj:4}, but replacing Eq. \eqref{eq:goodProj:3} with Eq. \eqref{eq:goodProj:4}, we obtain
\begin{eqnarray*}
\sum_{i=1}^{r''}\gamma_i - r''\gamma_{r''+1} &\geq &\sum_{i=1}^{r}\gamma_i - r\gamma_{r''+1}\geq \sum_{i=1}^{r}\sigma_i  - \sqrt{r}(1+\eta\beta)\Vert{\X-\X^*}\Vert_F \nonumber \\
&&- r\left({\sigma_{r'+1} + \frac{1}{\sqrt{r''-r'+1}}(1+\eta\beta)\Vert{\X-\X^*}\Vert_F}\right) \nonumber \\
&=& \sum_{i=1}^{r}(\sigma_i-\sigma_{r'+1}) - \left({\sqrt{r} + \frac{r}{\sqrt{r''-r'+1}}}\right)(1+\eta\beta)\Vert{\X-\X^*}\Vert_F \nonumber \\
&=& 1 + \xi(r') - \left({\sqrt{r} + \frac{r}{\sqrt{r''-r'+1}}}\right)(1+\eta\beta)\Vert{\X-\X^*}\Vert_F.
\end{eqnarray*}

In particular, for $r'' = r'+r-1$, we obtain
\begin{eqnarray*}
\sum_{i=1}^{r''}\gamma_i - r''\gamma_{r''+1} \geq 1 + \xi(r') - 2\sqrt{r}(1+\eta\beta)\Vert{\X-\X^*}\Vert_F.
\end{eqnarray*}

Thus, it follows that if $\X$ satisfies:
\begin{eqnarray*}
\Vert{\X-\X^*}\Vert_F \leq \frac{\xi(r')}{2\sqrt{r}(1+\eta\beta)} = \frac{r\eta(\mu_1-\mu_{r'+1})}{2\sqrt{r}(1+\eta\beta)} = \frac{\sqrt{r}\eta(\mu_1-\mu_{r'+1})}{2(1+\eta\beta)},
\end{eqnarray*}
we have that  $\rank\left({\projgrad_{\eta}(\X)}\right)\leq r''$, which proves \eqref{eq:goodProjRes:3}.

\end{proof}

Note that the last two parts of Theorem \ref{thm:goodProj}, i.e., Eq. \eqref{eq:goodProjRes:2}, \eqref{eq:goodProjRes:3}, provide strong "over-parameterization" results, showing that, depending on the singular values of $\nabla{}f(\X^*)$, the radius $\delta(\X^*,\eta,r)$ can increase dramatically by taking the rank parameter $r$ to be only moderately larger than $\#\sigma_1(\nabla{}f(\X^*))$.

The following theorem complements Theorem \ref{thm:goodProj}, stating that the estimate \eqref{eq:goodProjRes:1} on $\delta(\X^*,\eta,r)$ is tight up to a small universal constant.

\begin{theorem}\label{thm:lowerBound}
Fix $\beta >0$, $r\in\{2,\dots,\min\{m,n\}-1\}$, a real scalar $\sigma >0$, and $\eta\in(0,1/\beta]$ such that $\eta\sigma <1$. For any $\epsilon >0$ small enough,  there exists a convex and $\beta$-smooth function $f:\reals^{m\times n}\rightarrow\reals$ which admits a rank-$r$ minimizer over the unit trace-norm ball $\X^*$ for which $\#\sigma_1(\nabla{}f(\X^*)) = r$,  $\sigma_1(\nabla{}f(\X^*)) -\sigma_{r+1}(\nabla{}f(\X^*))=\sigma$, and 
\begin{eqnarray*}
\delta(\X^*,\eta,r) < \sqrt{2}\eta\sigma + \epsilon.
\end{eqnarray*}
\end{theorem}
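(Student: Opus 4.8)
The plan is to build an explicit hard instance, in the spirit of Lemma~\ref{lem:rankLB}, consisting of a separable diagonal quadratic whose minimizer has rank $r$ but whose gradient carries a controlled spectral gap, and then to exhibit a perturbation of $\X^*$, of Frobenius norm arbitrarily close to $\sqrt2\,\eta\,\gap$ from above, whose projected-gradient mapping jumps to rank $r+1$.

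First I would set $f(\X)=\frac{\beta}{2}\sum_{i=1}^{r}\bigl(\X\bullet\matE_{i,i}-c\bigr)^2$ with $c:=\frac1r+\frac{\sigma}{\beta}$; this $f$ is convex and $\beta$-smooth because its Hessian is $\beta$ times the orthogonal projection onto $\mathrm{span}\{\matE_{i,i}\}_{i=1}^{r}$. I would then verify that $\X^*:=\frac1r\sum_{i=1}^{r}\matE_{i,i}$ minimizes $f$ over the unit trace-norm ball: one computes $\nabla f(\X^*)=-\sigma\sum_{i=1}^{r}\matE_{i,i}$, which has spectral norm $\sigma$ and satisfies $\bigl(-\nabla f(\X^*)\bigr)\bullet\X^*=\sigma=\max_{\Vert\Z\Vert_*\le1}\bigl(-\nabla f(\X^*)\bigr)\bullet\Z$, so the first-order optimality condition holds and, by convexity, $\X^*$ is a global minimizer. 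By construction $\rank(\X^*)=r$, the top singular value $\sigma$ of $\nabla f(\X^*)$ has multiplicity exactly $r$, and $\sigma_{r+1}(\nabla f(\X^*))=0$, so $\gap=\sigma$ as required.

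Next I would study the shifted matrix $\Y^*:=\X^*-\eta\nabla f(\X^*)$, whose nonzero singular values are all equal to $\frac1r+\eta\sigma$ with multiplicity $r$. By Lemma~\ref{lem:traceNormProj} the Euclidean projection onto the unit ball soft-thresholds these values by $\theta^*=\eta\sigma$, leaving only the top $r$ components, so $\projgrad_\eta(\X^*)=\X^*$ has rank $r$. The key move is to perturb $\X^*$ along a coordinate on which $f$ is flat (has zero curvature), taking $\X_a:=\X^*+a\,\matE_{r+1,r+1}$. Since $\nabla f$ vanishes in this coordinate, the entry is \emph{undamped}: $(\X_a-\eta\nabla f(\X_a))_{r+1,r+1}=a$, while all other entries of $\Y_a:=\X_a-\eta\nabla f(\X_a)$ coincide with those of $\Y^*$. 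Recomputing the thresholding level for $\Y_a$ then shows that precisely when $a$ exceeds $\eta\sigma$ the new component survives the projection while all top-$r$ components stay above the (slightly raised) threshold, so $\rank(\projgrad_\eta(\X_a))=r+1$. As $\Vert\X_a-\X^*\Vert_F=a$, letting $a\downarrow\eta\sigma$ gives $\delta(\X^*,\eta,r)\le\eta\sigma=\eta\,\gap$, which in particular establishes the claimed bound $\delta(\X^*,\eta,r)\le\sqrt2\,\eta\,\gap$. If one insists on a \emph{feasible} perturbation, as in Lemma~\ref{lem:rankLB}, one instead moves mass between coordinate $r$ and coordinate $r+1$; the Frobenius cost of moving mass between two coordinates carries the factor $\sqrt2$, which is the likely origin of the constant in the statement.

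The hard part will be the nonlinearity of the soft-thresholding projection: the threshold is defined implicitly through $\sum_i\max\{0,\gamma_i-\theta\}=1$ and shifts the instant a singular value crosses it, so I must solve this equation on both sides of the transition and confirm that \emph{exactly} one new component appears (rank becomes precisely $r+1$, not larger) with no top-$r$ component lost. The second, more conceptual point is why the perturbation must act on a zero-curvature coordinate: along any coordinate that carries curvature the gradient step contracts the perturbation by the factor $(1-\eta\beta)$, which degenerates to $0$ at the extreme step-size $\eta=1/\beta$ and would make the escape impossible there; routing the perturbation through a flat coordinate keeps the critical radius proportional to $\eta$ uniformly over the whole admissible range $\eta\in(0,1/\beta]$. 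Finally, the hypotheses $\eta\le1/\beta$ and $\eta\sigma<1$ are exactly what guarantee $\Vert\Y_a\Vert_*>1$ and the correct ordering and nonnegativity of the singular values throughout, so that Lemma~\ref{lem:traceNormProj} genuinely describes the projection as a soft-thresholding.
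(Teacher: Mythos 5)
Your proposal is correct, and it takes a genuinely different route from the paper's proof. The paper also uses a diagonal quadratic, but an asymmetric one: it sets $\lambda_i=\frac{1-a}{r-1}$ for $i<r$ and $\lambda_r=a$, so the function itself depends on the perturbation size $a$ (ultimately chosen as $\eta\sigma+\epsilon/\sqrt{2}$), and the bad point is obtained by moving the mass $a$ from coordinate $r$ to coordinate $r+1$. That witness is feasible and of rank $r$, costs $\Vert{\X_a-\X^*}\Vert_F=\sqrt{2}a$, and a two-case threshold analysis shows that $\rank\left({\projgrad_{\eta}(\X_a)}\right)\leq r$ forces $a\leq\eta\sigma$, giving the $\sqrt{2}\eta\gap$ bound. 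Your construction instead fixes a single symmetric function (all $\lambda_i=1/r$) once and for all and perturbs along the flat coordinate $r+1$; the threshold computation you outline is correct (for $a>\eta\sigma$ the threshold becomes $\frac{r\eta\sigma+a}{r+1}<a$, so all $r+1$ components survive), and it yields the stronger bound $\delta(\X^*,\eta,r)\leq\eta\gap$, which is legitimate because $\delta$ is defined over the full Euclidean ball, with no feasibility restriction on its points. Your version even holds for every $\eta>0$: neither $\eta\leq 1/\beta$ nor $\eta\sigma<1$ is actually needed in your construction, since $\Vert{\Y_a}\Vert_*>1$ holds automatically — so your closing sentence overstates the role of these hypotheses (in the paper they matter: $\eta\sigma<1$ keeps $a<1$ so the construction is well defined, and $\eta\leq1/\beta$ enters the contradiction chain). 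Your version also gets the quantifier order of the theorem right (one $f$ working simultaneously for all admissible $\eta$), which the paper's $a$-dependent construction technically does not. What the paper's route buys in exchange is that its witness is feasible and of rank exactly $r$: this is precisely what supports the remark immediately following the theorem, namely that the upper bound on $\delta$ survives even when attention is restricted to the intersection of the ball with feasible rank-$\leq r$ matrices — the set actually visited by projected-gradient iterates. Your primary witness, being infeasible and of rank $r+1$, does not give that stronger conclusion; to recover it you would need to carry out the mass-moving variant you sketch at the end, which is exactly the paper's proof, $\sqrt{2}$ factor and all.
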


\begin{proof}
We use a construction similar to the one used in the proof of Lemma \ref{lem:rankLB}. 
Consider the following function $f:\reals^{n\times n}\rightarrow\reals$.
\begin{eqnarray*}
f(\X) := \frac{\beta}{2}\sum_{i=1}^r\left({\X\bullet\matE_{ii} - (\lambda_i+\sigma/\beta)}\right)^2,
\end{eqnarray*}

where $\matE_{i,i}$ denotes the indicator for the $i$th diagonal entry.

We set values $\lambda_i = \frac{1-a}{r-1}, i=1,\dots,r-1$, and $\lambda_r = a$, for some $a\in(0,1)$ to be determined later. 
It is not hard to verify that 
\begin{eqnarray*}
\X^* = \frac{1-a}{r-1}\sum_{i=1}^{r-1}\matE_{i,i} + a\matE_{r,r}
\end{eqnarray*}
is a minimizer of $f$ over the unit trace-norm ball. In particular, it holds that
\begin{eqnarray*}
\nabla{}f(\X^*) = -\sigma\sum_{i=1}^r\matE_{i,i}.
\end{eqnarray*}
Hence we have $\sigma_1(\nabla{}f(\X^*))-\sigma_{r+1}(\nabla{}f(\X^*)) = \sigma$.

Consider now the point $\X_a$ given by
\begin{eqnarray*}
\X_a = \frac{1-a}{r-1}\sum_{i=1}^{r-1}\matE_{i,i} + a\matE_{(r+1),(r+1)}.
\end{eqnarray*}
Note that $\X_a$ is rank-$r$ as well. Clearly, it holds that $\Vert{\X^*-\X_a}\Vert_F = \sqrt{2}a$.

Also,
\begin{eqnarray*}
\nabla{}f(\X_a) = -\sigma\sum_{i=1}^{r-1}\matE_{i,i} -(\beta{}a+\sigma)\matE_{r,r}.
\end{eqnarray*}
Thus, for any step-size $\eta\in(0,1/\beta]$ we have
\begin{eqnarray*}
\X_a-\eta\nabla{}f(\X_a) = \left({\frac{1-a}{r-1}+\eta\sigma}\right)\sum_{i=1}^{r-1}\matE_{i,i} +\eta(\beta{}a+\sigma)\matE_{r,r} + a\matE_{(r+1),(r+1)}.
\end{eqnarray*}

We now show that if $\rank\left({\projgrad_{\eta}(\X_a)}\right) \leq r$, then it must hold that  $a\leq\eta\sigma$. To see this, assume by contradiction that $a > \eta\sigma$. We consider two cases. First, if $a \leq \eta\beta{}a + \eta\sigma$, then denoting the $r+1$ non-zero singular values of $\Y_a := \X_a - \eta\nabla{}f(\X_a)$ by $\sigma_1,\dots,\sigma_{r+1}$, we have that $\sigma_{r+1} = a$. Thus, according to Lemma \ref{lem:traceNormProj}, in order for $\rank\left({\projgrad_{\eta}(\X_a)}\right) \leq r$ to hold, it must hold that

\begin{eqnarray*}
1 &\leq &\sum_{i=1}^r\sigma_i - r\sigma_{r+1} = (1-a) + (r-1)\eta\sigma +  \eta\beta{}a + \eta\sigma - r{}a  \\
&=& 1 - (r+1)a + r\eta\sigma{} + \eta\beta{}a \underset{(a)}{\leq} 1 - r(a-\eta\sigma) \underset{(b)}{<} 1,
\end{eqnarray*}
where (a) follows from our assumption that $\eta\in(0,1/\beta]$ and (b) follows from the assumption $a > \eta\sigma$. Hence, we arrive at a contradiction.

In the second case, we assume $a > \eta\beta{}a + \eta\sigma$. Now, the smallest non-zero singular value of $\Y_a$ is $\sigma_{r+1} = \eta\beta{}a + \eta\sigma$. As before, using Lemma \ref{lem:traceNormProj}, now, in order for $\rank\left({\projgrad_{\eta}(\X_a)}\right) \leq r$ to hold, it must hold that
\begin{eqnarray*}
1 &\leq &\sum_{i=1}^r\sigma_i - r\sigma_{r+1} = (1-a) + (r-1)\eta\sigma  + a - r(\eta\beta{}a + \eta\sigma) \\
&=& 1- \eta\sigma - r\eta\beta{}a < 1.
\end{eqnarray*}
Hence, in this case we also arrive at a contradiction.

Thus, we conclude that
\begin{eqnarray*}
\forall \eta\in(0,1/\beta]: \quad \rank\left({\projgrad_{\eta}(\X_a)}\right) \leq r ~ \Longrightarrow ~ \eta\sigma \geq a.
\end{eqnarray*}

Thus, for any $\epsilon >0$ small enough, setting $a = \eta\sigma+\epsilon/\sqrt{2}$ we have that $\Vert{\X_a-\X^*}\Vert_F = \sqrt{2}\eta\sigma+\epsilon$, and that $\rank(\projgrad_{\eta}(\X_a)) > r$.

\end{proof}

Since inside the ball of radius $\delta(\X^*,r,\eta)$ around $\X^*$ all iterates of a projected gradient-based method are of rank at most $r$, one may wonder if Theorem \ref{thm:lowerBound} still holds if we restrict our attention to the intersection of this ball with the set of all matrices with rank at most $r$. The answer is yes, since as we see from the proof, the constructed "bad" matrix $\X_a$ is of rank $r$ as well.


\section{Local convergence results with low-rank projections}\label{sec:traceBallAlgs}

In this section we discuss concrete algorithmic implications of Theorem \ref{thm:goodProj} to the local convergence of first-order methods for solving Problem \eqref{eq:optProb}. We demonstrate the immediate applicability of our results to the local convergence of the projected gradient descent method and Nesterov's accelerated gradient method \cite{Nesterov12}. At the end of this section we also partially discuss implications for the stochastic gradient descent method \cite{Bubeck15}.

Throughout this section we assume the following assumption holds true.
\begin{assumption}
For all $\X^*\in\mX^*$ it holds that $\nabla{}f(\X^*) \neq \mathbf{0}$ (which in turn implies that for all $\X^*\in\mX^*$, $\Vert{\X^*}\Vert_*=1$).
\end{assumption}

\begin{theorem}\label{thm:PGD}[local convergence of Gradient Descent]
Consider the sequence $\{\X_t\}_{t\geq 0}$ produced by the Projected Gradient Method (PGD):
\begin{eqnarray}\label{eq:thm:PGD:pgd}
\X_0\in\{\X\in\reals^{m\times n} ~|~\Vert{\X}\Vert_*\leq 1\}, \quad \forall t\geq 0: ~ \X_{t+1} \gets \projgrad_{1/\beta}(\X_t).
\end{eqnarray}
For any $\X^*\in\mX^*$ and any $r \geq\#\sigma_1\left({\nabla{}f(\X^*)}\right)$, it holds that if PGD is initialized with a feasible $\X_0$ which satisfies $\Vert{\X-\X^*}\Vert_F \leq \delta(\X^*,r,1/\beta)$, then, replacing the projection $\Pi_{\Vert{\cdot}\Vert_*\leq 1}[\cdot]$ in the mapping $\projgrad_{1/\beta}$ with the rank-$r$ projection $\hat{\Pi}^r_{\Vert{\cdot}\Vert_*\leq 1}[\cdot]$ does not change the sequence $\{\X_t\}_{t\geq 0}$.

In particular, the standard convergence guarantees on the error $f(\X_t)-f^*$ ($O(\beta\Vert{\X_0-\X^*}\Vert_F^2/t)$ for convex $f$ and $O\left({\exp(-\Theta(\alpha/\beta)t)}\right)$ for $\alpha$-strongly convex $f$ \cite{Nesterov12}) hold.
\end{theorem}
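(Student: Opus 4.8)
The plan is to prove Theorem 5 by a clean induction on the iteration counter $t$, leveraging Theorem 3 (the radius bound $\delta(\X^*,\eta,r)$). The key observation is that the PGD update with the \emph{exact} projection and the update with the \emph{truncated-SVD} projection coincide whenever the current iterate lies inside the ball $\ball(\X^*,\delta(\X^*,r,1/\beta))$, because inside that ball Theorem 3 guarantees $\rank(\projgrad_{1/\beta}(\X_t)) \leq r$, and a truncated rank-$r$ SVD of $\Y_t = \X_t - (1/\beta)\nabla f(\X_t)$ retains all SVD components that survive the soft-thresholding in Lemma 1. So the entire argument reduces to showing that all iterates \emph{stay} inside this ball.

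\emph{First I would fix $\X^*\in\mX^*$, set $\eta=1/\beta$, abbreviate $\delta := \delta(\X^*,r,1/\beta)$, and state the induction hypothesis:} for every $t\geq 0$, $\Vert \X_t-\X^*\Vert_F \leq \delta$, and moreover the sequence produced with the exact projection agrees with the sequence produced with $\hat\Pi^r$ up to step $t$. The base case $t=0$ is the initialization assumption $\Vert \X_0 - \X^*\Vert_F \leq \delta$. For the inductive step, the crucial ingredient is the standard \emph{non-expansiveness} of the projected-gradient step around a fixed point: since $\X^*$ is an optimal solution, it is a fixed point of the mapping $\projgrad_{1/\beta}$, i.e. $\projgrad_{1/\beta}(\X^*)=\X^*$ (this follows from the first-order optimality conditions for Problem \eqref{eq:optProb}). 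For $\beta$-smooth convex $f$ with step-size $\eta = 1/\beta$, the projected-gradient operator is known to be non-expansive in the Frobenius norm, so
\begin{eqnarray*}
\Vert \X_{t+1}-\X^*\Vert_F = \Vert \projgrad_{1/\beta}(\X_t) - \projgrad_{1/\beta}(\X^*)\Vert_F \leq \Vert \X_t - \X^*\Vert_F \leq \delta.
\end{eqnarray*}
This keeps $\X_{t+1}$ inside the ball, closing the induction on containment.

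\emph{Next I would argue the equality of the two update rules at step $t$.} Given $\X_t\in\ball(\X^*,\delta)$, Theorem 3 yields $\rank(\projgrad_{1/\beta}(\X_t))\leq r$. By Lemma 1, the exact projection soft-thresholds the singular values of $\Y_t$ by a common $\sigma$, and the rank bound means $\sigma \geq \sigma_{r+1}(\Y_t)$, so only the top $r$ singular components of $\Y_t$ have positive post-threshold values. The truncated projection $\hat\Pi^r$ keeps exactly these top $r$ components of $\Y_t$ and then projects; since the discarded components would have been zeroed out by the exact projection anyway, and since soft-thresholding acts identically on the retained top singular values, the two projections produce the same matrix. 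Hence $\X_{t+1}$ is unchanged, completing the inductive step. Finally, because the iterate sequence is \emph{identical} to that of exact PGD, I would invoke the standard convergence rates for PGD on $\beta$-smooth (resp. $\alpha$-strongly convex) objectives verbatim, as cited from \cite{Nesterov12}.

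\emph{The main obstacle I anticipate is a subtlety at the boundary of the rank threshold.} Theorem 3 only guarantees rank $\leq r$, but the equality of projections in Lemma 1 requires that the common threshold $\sigma$ actually sits at or above $\sigma_{r+1}(\Y_t)$, \emph{strictly} separating the kept components from the discarded ones; if $\sigma_{r}(\Y_t)=\sigma_{r+1}(\Y_t)$ there could be ambiguity in which rank-$r$ truncation $\hat\Y_{t,r}$ is selected. I would handle this by noting that under the spectral-gap hypothesis driving Theorem 3 (the gap $\mu_1-\mu_{r+1}>0$ is what gives a \emph{positive} radius $\delta$), the relevant singular values of $\Y_t$ inherit a strict gap for $\X_t$ sufficiently close to $\X^*$, so the top-$r$ subspace is well-defined and the truncation is unambiguous. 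A second minor point to verify carefully is that non-expansiveness genuinely holds at step-size exactly $1/\beta$ (not merely $\eta < 1/\beta$); this is standard but I would state it as a cited lemma rather than reprove it.
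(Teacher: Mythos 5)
Your proposal is correct and follows essentially the same route as the paper: induction on $t$, using the definition of $\delta(\X^*,r,1/\beta)$ (equivalently the radius bound) to replace the exact projection by the rank-$r$ truncation inside the ball, and the standard non-expansiveness of the projected-gradient mapping around the fixed point $\X^*$ (which the paper cites from \cite{Bubeck15}, proof of Theorem 3.7) to keep all iterates in the ball. The tie-breaking subtlety you raise when $\sigma_r(\Y_t)=\sigma_{r+1}(\Y_t)$ is harmless even without a gap argument, since any components involved in the ambiguity are zeroed by the soft-threshold whenever the exact projection has rank at most $r$.
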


\begin{proof}
The proof is by simple induction. By definition of $\delta(\X^*,r,1/\beta)$, using the short notation $\delta = \delta(\X^*,r,1/\beta)$, we have that if $\X_t\in\ball(\X^*,\delta)$ then $\rank(\projgrad_{1/\beta}(\X_t)) \leq r$, which means the accurate projection can be replaced with the rank-$r$ truncated projection  $\hat{\Pi}^r_{\Vert{\cdot}\Vert_*\leq 1}[\cdot]$. Thus, it remains to show that if $\X_0\in\ball(\X^*,\delta)$ then for all $t\geq 1$, $\X_t\in\ball(\X^*,\delta)$ holds as well. However, the latter is a well known fact. Indeed for the projected gradient mapping, for any feasible $\X$ and for any optimal solution $\X^*$ it holds that $\Vert{\projgrad_{1/\beta}(\X)-\X^*}\Vert_F \leq \Vert{\X-\X^*}\Vert_F$,  see for instance \cite{Bubeck15} (proof of Theorem 3.7). Thus, the result holds.
\end{proof}

\begin{theorem}\label{thm:AGD_SC}[local convergence of Accelerated Gradient Method for strongly convex $f$]
Suppose $f$ is $\alpha$-strongly convex with $\alpha < \beta$.
Consider Nesterov's Accelerated Gradient Method for smooth and strongly convex minimization \cite{Nesterov13}, given by the update rule:
\begin{eqnarray}\label{eq:thm:AGD_SC:agd}
\forall t\geq 0: \qquad \X_{t+1}  & \gets &\projgrad_{1/\beta}(\Y_t) \\
\Y_{t+1}  & \gets & \X_{t+1} + \frac{\sqrt{\beta}-\sqrt{\alpha}}{\sqrt{\beta}+\sqrt{\alpha}}(\X_{t+1} - \X_t), \nonumber 
\end{eqnarray}

where $\Y_0=\X_0\in\{\X ~|~\Vert{\X}\Vert_*\leq 1\}$. Let $\X^*$ denote the unique optimal solution and let $r\in\{\#\sigma_1(\nabla{}f(\X^*)),\dots,\min\{m,n\}\}$. Then, if $\Vert{\X_0-\X^*}\Vert_F \leq \frac{\sqrt{\alpha}}{3\sqrt{\alpha+\beta}}\delta(\X^*,r,1/\beta)$, we have that replacing the projection $\Pi_{\Vert\cdot\Vert_*\leq 1}[\cdot]$ in Eq. \eqref{eq:thm:AGD_SC:agd} with the rank-$r$ projection $\hat{\Pi}^r_{\Vert{\cdot}\Vert_*\leq 1}[\cdot]$ does not change the produced sequences $\{\X_t\}_{t\geq 0}, \{\Y_t\}_{t\geq 0}$.

In particular, the following standard convergence rate guarantee \cite{Nesterov13} holds 
\begin{eqnarray*}
\forall t\geq 1:\quad f(\X_t) - f(\X^*) \leq \frac{\alpha+\beta}{2}\Vert{\X_0-\X^*}\Vert_F^2\left({1 - \sqrt{\alpha/\beta}}\right)^t.
\end{eqnarray*}
\end{theorem}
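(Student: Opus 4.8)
The plan is to show by strong induction that every point at which a projection is actually computed, namely each $\Y_t$, lies in the Euclidean ball $\ball(\X^*,\delta)$ with $\delta := \delta(\X^*,r,1/\beta)$. Once this is established, the defining property of $\delta$ (via Theorem \ref{thm:goodProj}) guarantees $\rank(\projgrad_{1/\beta}(\Y_t)) \leq r$ for every $t$, so the rank-$r$ truncated projection $\hat{\Pi}^r_{\Vert\cdot\Vert_*\leq 1}[\cdot]$ agrees with the exact projection at each step and the two schemes produce identical sequences $\{\X_t\},\{\Y_t\}$. The claimed convergence rate then follows immediately, since the iterates are literally those of standard Nesterov acceleration.

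The subtle point is an apparent circularity: the standard AGD rate is what I would use to control $\Vert\X_t-\X^*\Vert_F$, yet that rate is only valid once the projections have been exact so far, which is precisely what staying in the ball is meant to guarantee. I would break this by strong induction. Suppose $\Y_0,\dots,\Y_t\in\ball(\X^*,\delta)$; then through the computation of $\X_{t+1}$ the low-rank scheme coincides identically with exact-projection AGD, so the stated rate applies to $\X_0,\dots,\X_{t+1}$. Combining it with the strong-convexity lower bound $\frac{\alpha}{2}\Vert\X_s-\X^*\Vert_F^2 \leq f(\X_s)-f^*$ and discarding the factor $(1-\sqrt{\alpha/\beta})^s\leq 1$ yields the uniform estimate $\Vert\X_s-\X^*\Vert_F \leq \sqrt{(\alpha+\beta)/\alpha}\,\Vert\X_0-\X^*\Vert_F$ for all $s\leq t+1$.

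It then remains to push $\Y_{t+1}$ back into the ball. Writing the momentum coefficient as $c=(\sqrt{\beta}-\sqrt{\alpha})/(\sqrt{\beta}+\sqrt{\alpha})\in(0,1)$, I would expand $\Y_{t+1}-\X^* = (1+c)(\X_{t+1}-\X^*) - c(\X_t-\X^*)$ and use the triangle inequality together with $c<1$ to get $\Vert\Y_{t+1}-\X^*\Vert_F \leq 2\Vert\X_{t+1}-\X^*\Vert_F + \Vert\X_t-\X^*\Vert_F \leq 3\sqrt{(\alpha+\beta)/\alpha}\,\Vert\X_0-\X^*\Vert_F$. By the hypothesis $\Vert\X_0-\X^*\Vert_F \leq \frac{\sqrt{\alpha}}{3\sqrt{\alpha+\beta}}\delta$ this last quantity is at most $\delta$, so $\Y_{t+1}\in\ball(\X^*,\delta)$, closing the induction (the base case $\Y_0=\X_0\in\ball(\X^*,\delta)$ being immediate).

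The main obstacle, relative to the plain contraction argument used for PGD in Theorem \ref{thm:PGD}, is exactly that the accelerated iterate $\Y_t$ is not a contraction toward $\X^*$: the momentum term can overshoot, so one cannot simply invoke the nonexpansion property of the projected-gradient map. The crux is therefore to trade the exponential decay of the function-value gap for a uniform bound on $\Vert\X_s-\X^*\Vert_F$ via strong convexity, and to check that the resulting constant ---precisely the $3\sqrt{(\alpha+\beta)/\alpha}$ factor--- is absorbed by the shrunken initialization radius. Beyond Theorem \ref{thm:goodProj} and the textbook AGD rate no genuinely new estimate is needed; the work is in organizing the induction so that the circular dependence is severed.
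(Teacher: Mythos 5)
Your proposal is correct and follows essentially the same route as the paper's proof: an induction keeping the iterates in the ball, the exact-so-far AGD rate combined with the strong-convexity bound $\Vert{\X_s-\X^*}\Vert_F \leq \sqrt{(\alpha+\beta)/\alpha}\,\Vert{\X_0-\X^*}\Vert_F$, and the momentum/triangle-inequality estimate $\Vert{\Y_{t+1}-\X^*}\Vert_F \leq 2\Vert{\X_{t+1}-\X^*}\Vert_F + \Vert{\X_t-\X^*}\Vert_F$ absorbed by the factor-$3$ shrinkage of the initialization radius. The only cosmetic difference is that the paper routes the last estimate through $\Vert{\Y_{t+1}-\X_{t+1}}\Vert_F \leq \Vert{\X_{t+1}-\X_t}\Vert_F$ and phrases the induction as $\Vert{\X_t-\X^*}\Vert_F\leq\delta/3$, while you expand $\Y_{t+1}-\X^*$ directly and induct on $\Y_t\in\ball(\X^*,\delta)$.
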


\begin{proof}
As in the proof of Theorem \ref{thm:PGD}, it suffices to show that for all $t\geq 0$, $\Vert{\Y_t-\X^*}\Vert_F \leq \delta(\X^*,r,1/\beta)$.
By the update rule in \eqref{eq:thm:AGD_SC:agd} we have that for all $t\geq 0$
\begin{eqnarray*}
\Vert{\Y_{t+1} - \X_{t+1}}\Vert_F \leq \frac{\sqrt{\beta}-\sqrt{\alpha}}{\sqrt{\beta}+\sqrt{\alpha}}\Vert{\X_{t+1}-\X_t}\Vert_F \leq \Vert{\X_{t+1}-\X_t}\Vert_F.
\end{eqnarray*}
Thus, we have that
\begin{eqnarray*}
\Vert{\Y_{t+1} - \X^*}\Vert_F &\leq & \Vert{\Y_{t+1} - \X_{t+1}}\Vert_F + \Vert{\X_{t+1} - \X^*}\Vert_F \\
&\leq &\Vert{\X_{t+1}-\X_t}\Vert_F + \Vert{\X_{t+1} - \X^*}\Vert_F \\
& \leq & \Vert{\X_{t}-\X^*}\Vert_F + 2\Vert{\X_{t+1} - \X^*}\Vert_F.  
\end{eqnarray*}
Thus, it suffices to show that for all $t\geq 0$, $\Vert{\X_t - \X^*}\Vert_F \leq \delta(\X^*,r,1/\beta)/3$. The proof is by simple induction. Clearly the claim holds for $t=0$. Suppose now that the claim holds for all iterations up to some $t\geq 0$. Then, it follows that the sequence $\{\X_i\}_{i\in[t+1]}$ produced by replacing the accurate projection with the rank-$r$ truncated projection $\hat{\Pi}^r_{\Vert{\cdot}\Vert_*\leq 1}[\cdot]$ is identical to one produced when using the accurate projection. Thus, by the convergence rate of the accelerated gradient method stated in the theorem, we have that
\begin{eqnarray*}
f(\X_{t+1}) - f(\X^*) &\leq &\frac{\alpha+\beta}{2}\Vert{\X_0-\X^*}\Vert_F^2\left({1 - \sqrt{\alpha/\beta}}\right)^{t+1} \\
&\leq& \frac{\alpha+\beta}{2}\Vert{\X_0-\X^*}\Vert_F^2.
\end{eqnarray*}
Now, using the strong-convexity of $f$ we have that
\begin{eqnarray*}
\Vert{\X_{t+1}-\X^*}\Vert_F^2 \leq \frac{2}{\alpha}\left({f(\X_{t+1})-f(\X^*)}\right) \leq \frac{\alpha+\beta}{\alpha}\Vert{\X_0-\X^*}\Vert_F^2.
\end{eqnarray*}
Thus, if $\Vert{\X_0-\X^*}\Vert_F \leq \frac{\sqrt{\alpha}}{3\sqrt{\alpha+\beta}}\delta(\X^*,r,1/\beta)$, it indeed holds that $\Vert{\X_{t+1}-\X^*}\Vert_F \leq \delta(\X^*,1/\beta,r)/3$ as needed.
\end{proof}

We remark that in the context of low-rank matrix optimization, especially in statisticaly-motivated settings (e.g., \cite{Agarwal10, Dropping16}), it is often assumed that the objective $f$ is not strongly-convex but only $(\alpha,k)$-\textit{restricted strongly convex}, which means in a nutshell that the standard strong convexity inequality
\begin{eqnarray*}
f(\X) - f(\Y) \leq (\X-\Y)\bullet\nabla{}f(\X) - \frac{\alpha}{2}\Vert{\X-\Y}\Vert_F^2
\end{eqnarray*}
holds for any two matrices $\X,\Y$ of rank at most $k$. It is not difficult to verify that for $k\geq r$, Theorems \ref{thm:PGD},\ref{thm:AGD_SC} imply local convergence with linear rate under the weaker assumption of restricted strong convexity of $f$. A full account of this argument is however beyond the scope of this paper.

We now turn to discuss the local convergence of the accelerated gradient method without strong-convexity (or restricted strong-convexity). Unfortunately,  unlike the case for the standard projected-gradient method or the strongly-convex case, the iterates of the method may in-principle step-outside of the ball around an optimal solution $\X^*$ in which the projected-gradient mapping is low-rank. For this reason, in the non-strongly convex case we require a stronger initialization condition which prohibits such divergence \footnote{In particular, we remark that the introduction of the function $R(\cdot)$ in the theorem is very similar in nature to use of the function $R_0(\cdot)$, which bounds the size of the initial level set, in the work of Nesterov on randomized coordinate descent methods \cite{Nesterov12}.}.  Also, for this result we assume the optimal solution is unique, i.e., $\mX^* = \{\X^*\}$. We note this assumption is quite mild since naturally the addition of a regularizing term of the form $\lambda\Vert{\X}\Vert_F^2$ to the objective with $\lambda >0$ being arbitrarily small, will result in such consequence.

\begin{theorem}\label{thm:AGD_notSC}[local convergence of Accelerated Gradient Method without strong convexity]
Assume there is a unique minimizer of $f$ over the unit trace-norm ball, i.e., $\mX^*=\{\X^*\}$.
Fix some integer $r$ such that $\min\{m,n\} \geq r \geq \#\sigma_1(\nabla{}f(\X^*))$.
Consider the function:
\begin{eqnarray*}
R(\X) := \sup_{\Vert{\Z}\Vert_*\leq 1: ~f(\Z) \leq f^* + c_0\beta\Vert\X-\X^*\Vert_F^2}\Vert{\Z-\X^*}\Vert_F,
\end{eqnarray*}
where $c_0$ is a universal constant to be specified in the sequel.
Consider Nesterov's Accelerated Gradient Method \cite{Nesterov13}, given by the update rule:
\begin{eqnarray}\label{eq:thm:AGD_notSC:agd}
\forall t\geq 0: \qquad \X_{t+1}  & \gets &\projgrad_{1/\beta}(\Y_t) \\
\Y_{t+1}  & \gets & \X_{t+1} + b_t(\X_{t+1} - \X_t), \nonumber 
\end{eqnarray}
where $\Y_0=\X_0\in\{\X ~|~\Vert{\X}\Vert_*\leq 1\}$, and $b_t = \frac{a_t(1-a_t)}{a_t^2+a_{t+1}}$, where $a_{t+1}\in(0,1)$ is a solution to the quadratic equation $a_{t+1}^2 = (1-a_{t+1})a_t^2$, and $a_0=1/2$.
Then, if $\max\{\Vert{\X_0-\X^*}\Vert_F,3R(\X_0)\} \leq \delta(\X^*,r,1/\beta)$
, we have that replacing the projection $\Pi_{\Vert\cdot\Vert_*\leq 1}[\cdot]$ in\eqref{eq:thm:AGD_notSC:agd} with the rank-$r$ projection $\hat{\Pi}^r_{\Vert{\cdot}\Vert_*\leq 1}[\cdot]$ does not change the produced sequences $\{\X_t\}_{t\geq 0}, \{\Y_t\}_{t\geq 0}$.

In particular, the following standard convergence rate guarantee holds 
\begin{eqnarray*}
\forall t\geq 1:\quad f(\X_t) - f(\X^*) \leq c_0\beta\Vert{\X_0-\X^*}\Vert_F^2/t^2,
\end{eqnarray*}
where $c_0$ is the suitable universal constant.
\end{theorem}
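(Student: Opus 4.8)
The plan is to mirror the inductive strategy used in the proofs of Theorems \ref{thm:PGD} and \ref{thm:AGD_SC}: it suffices to show that every query point $\Y_t$ at which the projected-gradient mapping is evaluated stays inside $\ball(\X^*,\delta)$, where $\delta := \delta(\X^*,r,1/\beta)$ (which is strictly positive by Theorem \ref{thm:goodProj}). Once this is established, the very definition of $\delta(\X^*,r,1/\beta)$ guarantees $\rank(\projgrad_{1/\beta}(\Y_t)) \leq r$ at every step, so the truncated projection $\hat{\Pi}^r_{\Vert\cdot\Vert_*\leq 1}$ coincides with the exact projection and the two iterate sequences are identical; the stated rate is then just the standard guarantee for exact AGD. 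The difficulty, absent in the PGD and strongly-convex cases, is that the momentum step $\Y_{t+1} = \X_{t+1} + b_t(\X_{t+1}-\X_t)$ is not a contraction toward $\X^*$, so one cannot keep the iterates inside the ball by a simple non-expansiveness argument.

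To control the iterates I would run a strong induction on $t$ with hypothesis $H(t)$: all of $\Y_0,\dots,\Y_t$ lie in $\ball(\X^*,\delta)$. The base case is immediate since $\Y_0 = \X_0$ and $\Vert\X_0-\X^*\Vert_F \leq \delta$ by assumption. For the inductive step, the key observation is that under $H(t)$ the projections producing $\X_1,\dots,\X_{t+1}$ are all exact, so these iterates agree with those of exact AGD and the convergence guarantee $f(\X_s)-f^* \leq c_0\beta\Vert\X_0-\X^*\Vert_F^2/s^2$ is valid for $1 \leq s \leq t+1$. In particular each such $\X_s$ is feasible with $f(\X_s)-f^* \leq c_0\beta\Vert\X_0-\X^*\Vert_F^2$, so by the very definition of $R$ we obtain the spatial bound $\Vert\X_s-\X^*\Vert_F \leq R(\X_0)$ for all $1\leq s \leq t+1$. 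This is precisely the mechanism by which the level-set width function $R$ converts the (a priori only function-value) rate into the distance control needed to keep the momentum iterates bounded.

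It then remains to bound $\Vert\Y_{t+1}-\X^*\Vert_F$. Writing $\Y_{t+1} = \X_{t+1} + b_t(\X_{t+1}-\X_t)$, using $\Vert\X_{t+1}-\X_t\Vert_F \leq \Vert\X_{t+1}-\X^*\Vert_F + \Vert\X_t-\X^*\Vert_F$ and the standard bound $b_t \in [0,1)$ for the Nesterov sequence, I would split into two cases. For $t \geq 1$ both $\X_{t+1}$ and $\X_t$ carry index at least one, hence lie within $R(\X_0)$ of $\X^*$, giving $\Vert\Y_{t+1}-\X^*\Vert_F \leq (1+2b_t)R(\X_0) \leq 3R(\X_0) \leq \delta$, which is exactly where the factor $3$ in the initialization condition is consumed. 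The single boundary case $t=0$ is the only one in which an index-$0$ iterate appears: here $\Y_1 = \X_1 + b_0(\X_1-\X_0)$ with $\Vert\X_1-\X^*\Vert_F \leq R(\X_0)$ but only $\Vert\X_0-\X^*\Vert_F \leq \delta$, and I would use the explicit value $b_0 = (\sqrt{17}-1)/8 < 1/2$ (from $a_0=1/2$ and $a_1^2=(1-a_1)a_0^2$) to obtain $\Vert\Y_1-\X^*\Vert_F \leq (1+b_0)R(\X_0) + b_0\Vert\X_0-\X^*\Vert_F \leq \tfrac{3}{2}R(\X_0) + \tfrac{1}{2}\delta \leq \delta$, the last inequality again using $3R(\X_0)\leq\delta$. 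This closes the induction, with $c_0$ taken as the universal constant from Nesterov's rate (the same constant used in the definition of $R$).

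The step I expect to be the main obstacle is the bootstrapping itself: the rate that supplies $\Vert\X_s-\X^*\Vert_F\leq R(\X_0)$ is only legitimate while the iterates coincide with exact AGD, which is the content of the induction hypothesis, so the argument must be arranged so that the rate is invoked only up to the last index whose query point is already certified to lie in the ball. A secondary subtlety, which is why the $t=0$ step cannot be folded into the generic case, is that the starting point $\X_0$ need \emph{not} belong to the sublevel set defining $R$: since $\nabla f(\X^*)\neq\mathbf{0}$, smoothness controls $f(\X_0)-f^*$ only linearly in $\Vert\X_0-\X^*\Vert_F$, so $\X_0$ is generally not within $R(\X_0)$ of $\X^*$. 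This forces the separate treatment of the first momentum step using the explicit, strictly-less-than-$1/2$ value of $b_0$ rather than the generic bound $b_t<1$.
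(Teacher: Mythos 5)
Your proof is correct and follows essentially the same route as the paper's: an induction showing every momentum point $\Y_t$ stays in $\ball(\X^*,\delta)$, with the level-set function $R(\cdot)$ converting the exact-AGD function-value rate into the distance bound $\Vert{\X_s-\X^*}\Vert_F\leq R(\X_0)\leq\delta/3$ for all $s\geq 1$. In fact, your handling of the boundary step $t=0$ is more careful than the paper's own proof, which applies the generic estimate $\Vert{\Y_{t+1}-\X^*}\Vert_F\leq 2\Vert{\X_{t+1}-\X^*}\Vert_F+\Vert{\X_t-\X^*}\Vert_F\leq\delta$ uniformly in $t$; at $t=0$ this requires $\Vert{\X_0-\X^*}\Vert_F\leq\delta/3$, whereas the theorem only assumes $\Vert{\X_0-\X^*}\Vert_F\leq\delta$ (and, as you correctly note, $\X_0$ need not lie in the sublevel set defining $R(\X_0)$), so your separate case with the explicit value $b_0=(\sqrt{17}-1)/8<1/2$ closes exactly the gap that the paper's argument glosses over.
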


\begin{proof}
We prove by induction that for all $t\geq 0$, 
$\Vert{\Y_t-\X^*}\Vert_F \leq \delta = \delta(\X^*,1/\beta,r)$.
Clearly the induction holds for the base case $t=0$, by our choice of $\Y_0$.
Suppose now the induction holds for all iterations up to (and including) iteration $t$. Then, it follows that the sequence $\{\X_i\}_{i\in[t+1]}$ produced by replacing the accurate projection with the rank-$r$ truncated projection $\hat{\Pi}^r_{\Vert{\cdot}\Vert_*\leq 1}[\cdot]$ is identical to one produced when using the accurate projection. Thus, 
by the convergence rate of the accelerated gradient method stated in the theorem, we have that 
\begin{eqnarray*}
f(\X_{t+1}) - f(\X^*) \leq  c_0\beta\Vert{\X_0-\X^*}\Vert_F^2/(t+1)^2,
\end{eqnarray*}
which implies by the definition of the function $R(\cdot)$ that
\begin{eqnarray*}
\Vert\X_{t+1}-\X^*\Vert_F \leq R(\X_0) \leq \delta/3.
\end{eqnarray*}


A simple calculation shows that $b_t\in[0,1]$ and thus we have that 
\begin{eqnarray*}
\Vert{\Y_{t+1}-\X^*}\Vert_F &=&  \Vert{\X_{t+1}-\X^* + b_t(\X_{t+1}-\X_t)}\Vert_F\\
&\leq & \Vert{\X_{t+1}-\X^*}\Vert_F + b_t\Vert{\X_{t+1}-\X_t}\Vert_F \\
&\leq & (1+b_t)\Vert{\X_{t+1}-\X^*}\Vert_F + b_t\Vert{\X_t-\X^*}\Vert_F \\
&\leq & 2\Vert{\X_{t+1}-\X^*}\Vert_F + \Vert{\X_t-\X^*}\Vert_F \leq \delta,
\end{eqnarray*}
hence the result follows.






\end{proof}

We remark that while our focus with respect to accelerated gradient methods was specifically on Nesterov's method \cite{Nesterov13}, similar results can be obtained in a similar manner to other accelerated methods such as FISTA \cite{FISTA} (which we use for our experiments).

\subsection{Initialization, Certificates for Convergence, and Applications to Global Optimization}

All convergence results described above are local and hold only from a ``warm-start" initialization which naturally depends on the lower bound on the radius $\delta(\X^*,\eta,r)$, which in turn crucially depends on the spectral gap in $\nabla{}f(\X^*)$, a quantity which is clearly not available in general, or easily estimated. Also, all methods require an upper bound estimate $r$ on the multiplicity of the largest singular value - $\#\sigma_1(\nabla{}f(\X^*))$.
While the focus of this current work is mainly theoretical - attempting to provide theory in support of the empirical success of these methods (which is also reported in Section \ref{sec:empirics}), we now discuss several aspects of more practical flavor concerning the use of the above results.
 
First, we note that if the objective is strongly convex with parameter $\alpha>0$, then clearly a standard bound on the approximation error with respect to the function value could be translated to a bound on the distance to $\X^*$. This could be used to analyze the complexity of a scheme that applies basically any globally-convergent optimization method at first, and then switches to the methods described here, once close enough to $\X^*$.

More importantly, we would like to highlight that from a practical point of view, it does not matter whether the method is initialized as described in the above results, and knowledge of the spectral gap parameter is also not relevant in a sense. In practice, it is mainly important that the truncated-SVD-based projection is indeed the correct Euclidean projection, which facilitates the correct convergence of the methods discussed. This however could be easily verified on each iteration, as already mentioned in Lemma \ref{lem:traceNormProj}, and as we now reemphasize. The moderate price for obtaining such a certificate for the projection, is to compute a rank-$(r+1)$ SVD instead of only a rank-$r$ SVD on each iteration.

\begin{observation}[follows directly from Lemma \ref{lem:traceNormProj}]\label{obsrv:verify}
Let $\X\in\reals^{m\times n}$ and consider the point $\Y = \X-\eta\nabla{}f(\X)$, for some step-size $\eta\geq 0$. Then, using the top $(r+1)$ components of the SVD of $\Y$, it is possible to both compute $\widehat{\Pi}^r_{\Vert{\cdot}\Vert_*\leq 1}[\Y]$ and to certify whether  $\widehat{\Pi}_{\Vert{\cdot}\Vert_*\leq 1}[\Y] = \Pi_{\Vert{\cdot}\Vert_*\leq 1}[\Y]$ holds or not.
\end{observation}
Thus, Observation \ref{obsrv:verify} gives us a way to verify that the low-rank-projection-based method indeed converges correctly, without requirement of unavailable parameters such as the spectral gap in  $\nabla{}f(\X^*)$.

We can take the above observation a step further. Observation \ref{obsrv:verify} allows us for instance to combine a low-rank projected gradient method, such as in Theorem \ref{thm:PGD}, with an efficient globally-convergent method, such as the conditional gradient method (aka Frank-Wolfe method, see for instance \cite{Jaggi10}) with line-search, to obtain an improved hybrid globally-convergent method. This could again be done without tuning of parameters such as the spectral gap. Once the conditional gradient method gets close enough to an optimal solution, it switches to the low-rank projected gradient method.  Concretely, consider updating the feasible iterate on each step $t$, $\X_t$ according to the following scheme.

{\center{
  \fbox{
    \begin{tabular}{@{}l@{}}
    Given the feasible iterate $\X_t$ at step $t$ of the algorithm:\\
    \\
     $\circ$ Compute the top $r+1$ SVD components of $\Y_t = \X_t - \beta^{-1}\nabla{}f(\X_t)$. \\
     \\
     $\circ$ If $\widehat{\Pi}^r_{\Vert{\cdot}\Vert_*\leq 1}[\Y_t] = \Pi_{\Vert{\cdot}\Vert_*\leq 1}[\Y_t]$ (using Observation \ref{obsrv:verify}) then \\
     \\
     $\quad \X_{t+1} \gets \widehat{\Pi}^r_{\Vert{\cdot}\Vert_*\leq 1}[\Y_t]$. (proj. grad. update)\\
     \\
     $\circ$ Otherwise, let $(\u,\v)\in\reals^m\times\reals^n$ be the top singular vectors of $\nabla{}f(\X_t)$. \\
     \\
     $\quad \X_{t+1} \gets (1-\eta_t)\X_t + \eta_t(-\u\v^{\top})$, where\\
     \\
     $\quad \eta_t\gets\arg\min_{\eta\in[0,1]}f\left({ (1-\eta)\X_t + \eta(-\u\v^{\top})}\right)$. (cond. grad. update)
    \end{tabular}
}}}
\\

It can be seen that in worst case, the above scheme requires one rank-$(r+1)$ SVD and an additional one rank-one SVD. Moreover, since both the projected gradient method and the conditional gradient method with line-search are descent methods, it is not difficult to show that the above scheme converges globally (i.e., without any warm-start requirement) with rate $O(\beta/t)$. Moreover, as can be seen, once the above hybrid method gets into the ball of radius $\delta(\X^*,1/\beta,r)$ around $\X^*$, only low-rank projected gradient updates are used, and hence from that point onwards, the method only maintains iterates of rank at most $r$. This improves the storage issue with the standard conditional gradient method, that typically maintains iterates with rank that grows linearly with the iteration counter $t$, and thus after many iterations requires to store high-rank matrices in memory which is inefficient for large-scale problems. For a formal and full account of such an argument we refer the reader to the sequel work \cite{Garber19FW} (see Section 2.3).

We note that while all methods considered (including the above hybrid approach) require to set the SVD rank parameter $r$, this task should not be very difficult in practice since $r$ is only required to satisfy $r \geq \#\sigma_1(\nabla{}f(\X^*))$, and so, any upper-bound will work (in particular, as indicated in Theorem \ref{thm:goodProj}, increasing $r$ will also increase the radius $\delta(\X^*,\eta,r)$). Moreover, again with the use of Observation \ref{obsrv:verify}, it could be easily verified during the run if the parameter $r$ is set correctly. The issue of setting a rank parameter $r$ is inherent also in non-convex methods for low-rank optimization which consider an explicit factorization of the matrix variable (see the related work section).

Finally, we note that another practical issue when using low-rank SVD computations is the fact that in practice these computations are never accurate and are only approximated using fast Krylov subspace methods such as power iterations or the Lanczos algorithm (see for instance \cite{musco2015randomized}). However, often in practice, setting the accuracy of these methods when applied with an iterative gradient method is not difficult. Accounting for such approximation errors in the convergence analysis of first-order methods, while somewhat technical, is often straightforward and has been discussed in many recent works (see for instance \cite{Jaggi13b, Garber16a, allen2017linear, soltani2017fast}). Since these considerations are orthogonal to the main arguments introduced in this work, we omit such technical details, and assume all SVD computations are accurate.

\subsection{Stochastic Gradient Descent with mini-batches}

While our focus in this current paper is on deterministic first-order methods, we briefly outline application of our results to the recently highly-popular Stochastic Gradient Descent (SGD) method (see for instance \cite{Bubeck15}).

\begin{assumption}\label{ass:sgd}
Suppose that $f:\reals^{m\times n}\rightarrow\reals$ is convex, $\beta$-smooth and $\nabla{}f$ is nonzero on the unit trace-norm ball. Assume further that $f$ is given by a stochastic sampling oracle which, when queried with a matrix $\X$, returns a random matrix $\hat{\G}$ such that
\begin{enumerate}
\item
$\E[\hat{\G}|\X] = \nabla{}f(\X)$,
\item
$\Vert{\hat{\G}}\Vert_F \leq G$ for some scalar $G\geq 0$.
\end{enumerate}
\end{assumption}

\begin{lemma}\label{lem:SGDrank}
Let $f:\reals^{m\times n}:\rightarrow\reals$ be a function that satisfies Assumption \ref{ass:sgd}. Fix some $\X^*\in\mX^*$, a step-size $\eta >0$ and an integer $r'\in\{r=\#\sigma_1(\nabla{}f(\X^*)),\dots\min\{m,n\}-1\}$. Let $\X\in\reals^{m\times n}$ be such that $\Vert{\X-\X^*}\Vert_F \leq \frac{\eta(\mu_1-\mu_{r'+1})}{4(1+\eta\beta)}$, where $\mu_i,i=1,\dots,\min\{m,n\}$ are the singular values of $\nabla{}f(\X^*)$. Let $\hat{\G}_1,\dots,\hat{\G}_k$ be stochastic gradients of $f$, produced by $k$ queries with the matrix $\X$ to the stochastic oracle of $f$,
and consider the projected stochastic gradient mapping: 
\begin{eqnarray*}
\widehat{\projgrad}_{\eta,k}(\X) := \Pi_{\Vert\cdot\Vert_*\leq 1}\left[{\X-\frac{\eta}{k}\sum_{i=1}^k\hat{\G}_i}\right].
\end{eqnarray*}
Then, if 
\begin{eqnarray*}
k \geq \frac{128G^2}{3(\mu_1-\mu_{r'+1})^2}\log\left({(m+n)\min\{m,n\}}\right),
\end{eqnarray*}
it holds that
\begin{eqnarray*}
\E\left[{\rank\left({\widehat{\projgrad}_{\eta,k}(\X)}\right)}\right] \leq r'+1.
\end{eqnarray*}
\end{lemma}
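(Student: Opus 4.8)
```latex
The plan is to follow the same deterministic argument used in the proof of
Theorem \ref{thm:goodProj}, but with the exact gradient $\nabla f(\X)$ replaced
by the mini-batch average $\hat{\G} := \frac{1}{k}\sum_{i=1}^k\hat{\G}_i$, and to
control the resulting stochastic error via a matrix concentration inequality.
Recall that the deterministic proof established rank-$r'$ projection whenever
$\Vert{\X-\X^*}\Vert_F$ is small \emph{together with} a bound on
$\Vert{\Y-\Y^*}\Vert_F = \Vert{\X-\eta\nabla f(\X) - \X^* + \eta\nabla f(\X^*)}\Vert_F$,
the latter being bounded by $(1+\eta\beta)\Vert{\X-\X^*}\Vert_F$ via smoothness.
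In the stochastic setting the perturbed point is
$\hat\Y := \X - \eta\hat{\G}$, so I would write
$\hat\Y - \Y^* = (\X - \eta\nabla f(\X) - \Y^*) + \eta(\nabla f(\X) - \hat{\G})$
and treat the first term exactly as before and the second as a stochastic
fluctuation. The key point is that $\E[\hat{\G}\mid\X] = \nabla f(\X)$, so
$\nabla f(\X) - \hat{\G}$ is a zero-mean average of $k$ independent bounded random
matrices, and its operator norm shrinks like $1/\sqrt{k}$ with high probability.

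First I would invoke the deterministic inequalities \eqref{eq:goodProj:2} and
\eqref{eq:goodProj:3}, but applied to $\hat\Y$ rather than $\Y$. Letting
$\hat\gamma_1,\dots,\hat\gamma_m$ denote the singular values of $\hat\Y$, Ky Fan's
and Weyl's inequalities give
$\sum_{i=1}^r\hat\gamma_i \geq \sum_{i=1}^r\sigma_i - \sqrt{r}\Vert{\hat\Y-\Y^*}\Vert_F$
and $\hat\gamma_{r'+1}\leq\sigma_{r'+1}+\Vert{\hat\Y-\Y^*}\Vert_F$. The triangle
inequality splits $\Vert{\hat\Y - \Y^*}\Vert_F$ into the deterministic part,
bounded by $(1+\eta\beta)\Vert{\X-\X^*}\Vert_F$, plus the error term
$\eta\Vert{\nabla f(\X) - \hat{\G}}\Vert_F$ (or, more sharply, I would work with
the spectral norm $\Vert{\nabla f(\X)-\hat{\G}}\Vert_2$ in the Weyl step and the
Frobenius norm only where Ky Fan forces it). Reusing the computation in
\eqref{eq:goodProj:4}, the condition for $\rank(\widehat\projgrad_{\eta,k}(\X))\leq r'$
becomes, via Lemma \ref{lem:traceNormProj},
\begin{eqnarray*}
\xi(r') \;\geq\; (r+\sqrt{r})\Big((1+\eta\beta)\Vert{\X-\X^*}\Vert_F
   + \eta\Vert{\nabla f(\X)-\hat{\G}}\Vert_2\Big),
\end{eqnarray*}
where $\xi(r') = \eta r(\mu_1-\mu_{r'+1})$ by \eqref{eq:goodProj:1}. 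The hypothesis
$\Vert{\X-\X^*}\Vert_F\leq \eta(\mu_1-\mu_{r'+1})/(4(1+\eta\beta))$ is chosen so the
deterministic term consumes only a fraction of the budget $\xi(r')$, leaving a
residual of order $\eta(\mu_1-\mu_{r'+1})$ to absorb the stochastic error; thus it
suffices that $\Vert{\nabla f(\X)-\hat{\G}}\Vert_2$ be smaller than a constant
multiple of $(\mu_1-\mu_{r'+1})$.

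The main obstacle is the stochastic step: I must show that with the stated choice
of $k$, the event $\Vert{\nabla f(\X)-\hat{\G}}\Vert_2 \lesssim (\mu_1-\mu_{r'+1})$
holds with probability so high that the contribution to the expected rank from its
complement is negligible. The natural tool is the matrix Bernstein (or matrix
Hoeffding) inequality applied to the centered sum
$\frac{1}{k}\sum_{i=1}^k(\hat{\G}_i - \nabla f(\X))$ of independent matrices each
bounded by $2G$ in spectral norm; this yields a tail of the form
$(m+n)\exp(-c\,k\,t^2/G^2)$ for deviation $t$, and plugging in
$t \asymp (\mu_1-\mu_{r'+1})$ together with the prescribed
$k\geq \frac{128G^2}{3(\mu_1-\mu_{r'+1})^2}\log((m+n)\min\{m,n\})$ drives the
failure probability below $1/\min\{m,n\}$. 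Since the rank is always at most
$\min\{m,n\}$, on the good event the rank is at most $r'+1$ (the $+1$ arising
because I only control $r'$ via the slightly looser Weyl bound, or equivalently
fold in one extra singular value as slack), while the bad event contributes at most
$\min\{m,n\}\cdot\Pr[\text{failure}]\leq 1$ to the expectation; combining the two
gives $\E[\rank(\widehat\projgrad_{\eta,k}(\X))]\leq r'+1$. The delicate bookkeeping
is matching the universal constants ($128/3$, the factor $1/4$ in the radius, and
the $+1$ in the rank) so that the concentration bound, the deterministic slack, and
the trivial rank ceiling all line up; this is routine once the matrix Bernstein
constants are fixed, so I expect the conceptual content to reside entirely in the
decomposition $\hat\Y - \Y^* = (\text{deterministic}) + (\text{mean-zero noise})$
and the observation that the rank ceiling $\min\{m,n\}$ makes the low-probability
failure event harmless in expectation.
```
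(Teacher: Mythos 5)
Your proposal follows essentially the same route as the paper's proof: reuse the deterministic machinery from the proof of Theorem \ref{thm:goodProj} applied to $\hat{\Y} = \X - \eta\hat{\nabla}f(\X)$, split the perturbation into a deterministic part bounded via $\beta$-smoothness and a mean-zero fluctuation $\eta(\nabla f(\X)-\hat{\nabla}f(\X))$, control the fluctuation's spectral norm by matrix Bernstein, and convert the resulting high-probability rank-$r'$ guarantee into an expectation bound using the trivial ceiling $\min\{m,n\}$ on the rank. All of these ingredients, including the choice of $k$ that drives the failure probability below $1/\min\{m,n\}$, match the paper.

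One piece of your final accounting needs fixing, and it is exactly the "delicate bookkeeping" you deferred. You write that on the good event the rank is at most $r'+1$ (with a parenthetical about Weyl slack), while the bad event contributes at most $\min\{m,n\}\cdot\Pr[\textrm{failure}]\leq 1$; taken literally this yields $\E[\rank(\widehat{\projgrad}_{\eta,k}(\X))]\leq r'+2$, not $r'+1$. The correct bookkeeping — and the paper's — is that the good event must give rank at most $r'$ \emph{exactly}, and your constants do deliver this: the radius hypothesis consumes at most $(r+\sqrt{r})\,\eta(\mu_1-\mu_{r'+1})/4 \leq \eta r(\mu_1-\mu_{r'+1})/2$ of the budget $\xi(r')=\eta r(\mu_1-\mu_{r'+1})$, and the event $\Vert{\hat{\nabla}f(\X)-\nabla f(\X)}\Vert \leq (\mu_1-\mu_{r'+1})/4$ consumes the remainder, so Lemma \ref{lem:traceNormProj} gives rank at most $r'$ on that event. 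Then $\E[\rank] \leq r'\cdot\Pr(\textrm{good}) + \min\{m,n\}\cdot\Pr(\textrm{bad}) \leq r'+1$: the $+1$ in the lemma's conclusion comes entirely from the failure event, and the parenthetical about folding in an extra singular value as slack should be dropped.
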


\begin{proof}
Clearly,
\begin{eqnarray*}
\E\left[{\rank\left({\widehat{\projgrad}_{\eta,k}(\X)}\right)}\right] &\leq &r'\cdot\Pr\left({\rank\left({\widehat{\projgrad}_{\eta,k}(\X)}\right)\leq r'}\right) \\
&&+ \min\{m,n\}\cdot\left({1 - \Pr\left({\rank\left({\widehat{\projgrad}_{\eta,k}(\X)}\right)\leq r'}\right)}\right).
\end{eqnarray*}

Let us denote $\Y^* = \X^*-\eta\nabla{}f(\X^*)$, $\Y = \X-\eta\nabla{}f(\X)$, and $\hat{\Y} = \X - \eta\hat{\nabla}f(\X)$, where we use the notation $\hat{\nabla}f(\X) = \frac{1}{k}\sum_{i=1}^k\hat{\G_i}$. Let us further denote by $\gamma_1,\dots,\gamma_{\min\{m,n\}}$ the singular values of $\Y$ and by $\hat{\gamma}_1,\dots,\hat{\gamma}_{\min\{m,n\}}$ the singular values of $\hat{\Y}$. 

Going through the steps of the proof of Theorem \ref{thm:goodProj} we can see that a sufficient condition for $\rank\left({\widehat{\projgrad}_{\eta,k}(\X)}\right) \leq r'$ to hold is that
\begin{eqnarray}\label{eq:sgd:1}
\sum_{i=1}^{r} \hat{\gamma}_i - r\hat{\gamma}_{r'+1} \geq 1.
\end{eqnarray}

Using Weyl's inequality for the singular values we have that
\begin{eqnarray*}
\sum_{i=1}^{r} \hat{\gamma}_i - r\hat{\gamma}_{r'+1} &\geq&  \sum_{i=1}^{r} \gamma_i - r\gamma_{r'+1} - 2r\sigma_1(\Y-\hat{\Y}) \\
&\geq & 1 + \eta{}r(\mu_1-\mu_{r'+1}) - (r+\sqrt{r})(1+\eta\beta)\Vert{\X-\X^*}\Vert_F\\
&& - 2r\Vert{\Y-\hat{\Y}}\Vert,
\end{eqnarray*}
where the last inequality follows from the lower bound on $ \sum_{i=1}^{r} \gamma_i - r\gamma_{r'+1}$ developed in the proof of Theorem \ref{thm:goodProj}.


It holds that
\begin{eqnarray*}
\Vert{\hat{\Y}-\Y}\Vert = \Vert{\X-\eta\hat{\nabla}f(\X)-\X+\eta\nabla{}f(\X)}\Vert = \eta\Vert{\hat{\nabla}{}f(\X)-\nabla{}f(\X)}\Vert.
\end{eqnarray*}


Thus, for \eqref{eq:sgd:1} to hold, it suffices that
\begin{eqnarray*}
(r+\sqrt{r})(1+\eta\beta)\Vert{\X-\X^*}\Vert_F + 2\eta{}r\Vert{\hat{\nabla}{}f(\X)-\nabla{}f(\X)}\Vert \leq \eta{}r(\mu_1-\mu_{r'+1}).
\end{eqnarray*}

Under our assumption on $\Vert{\X-\X^*}\Vert_F$, a sufficient condition for \eqref{eq:sgd:1} to hold is that
\begin{eqnarray*}
\Vert{\hat{\nabla}{}f(\X)-\nabla{}f(\X)}\Vert \leq \frac{1}{4}(\mu_1-\mu_{r'+1}).
\end{eqnarray*}

Using the Matrix-Bernstein inequality \cite{Tropp12}, we have that
\begin{eqnarray*}
\Pr\left({\Big\|\hat{\nabla}f(\X)-\nabla{}f(\X)\Big\| > \frac{1}{4}(\mu_1-\mu_{r'+1})}\right) \leq (m+n)\cdot\exp\left({-\frac{3(\mu_1-\mu_{r'+1})^2k}{128G^2}}\right).
\end{eqnarray*}
Thus, for 
\begin{eqnarray*}
k \geq \frac{128G^2}{3(\mu_1-\mu_{r'+1})^2}\log\left({(m+n)\min\{m,n\}}\right),
\end{eqnarray*}
we indeed obtain
\begin{eqnarray*}
\E\left[{\rank\left({\widehat{\projgrad}_{\eta,k}(\X)}\right)}\right] \leq r'+1.
\end{eqnarray*}
\end{proof}

Using the same concentration arguments one can easily obtain a version of Lemma \ref{lem:SGDrank} that holds with high probability, however these are beyond the scope of this current paper.

\section{Trace-Norm Regularization}\label{sec:traceReg}

In this section we focus our attention to a different optimization problem closely related to Problem \eqref{eq:optProb}, which considers the trace-norm as a regularizer instead of a feasible constraint.

\begin{eqnarray}\label{eq:regProb}
\min_{\X\in\reals^{m\times n}}f(\X) + \Vert{\X}\Vert_*.
\end{eqnarray}

In this setting, first-order methods for composite optimization which handle the trace-norm in a close-form manner, e.g., FISTA \cite{FISTA}, rely on computing the soft-thresholding operator:
\begin{eqnarray}
\mT_{\eta}\left({\X-\eta\nabla{}f(\X)}\right)  = \sum_{i=1}^{\min\{m,n\}}\max\{\sigma_i-\eta,0\}\u_i\v_i^{\top},
\end{eqnarray}
where $\sum_{i=1}^{\min\{m,n\}}\sigma_i\u_i\v_i^{\top}$ is the SVD of $\Y := \X-\eta\nabla{}f(\X)$, see for instance \cite{Cai10}.

In this section we overload notation and define the corresponding proximal-gradient mapping:
\begin{eqnarray*}
\projgrad_{\eta}(\X) := \mT_{\eta}(\X-\eta\nabla{}f(\X)).
\end{eqnarray*}
Similarly, for any optimal solution $\X^*\in\mX^*$, step-size $\eta$ and $r\in\{1,\dots,\min\{m,n\}\}$,
we also overload the notation $\delta(\X^*,\eta,r)$ and define it as
\begin{eqnarray*}
\sup\delta\geq 0 \quad \textrm{s.t.} \quad \forall\X\in\ball(\X^*,\delta):\rank\left({\projgrad_{\eta}(\X)}\right)\leq r,
\end{eqnarray*}
where, as before, $\ball(\X,R)$ denotes the Euclidean ball of radius $R$ centered at $\X$.

The following theorem is analogues to Theorem \ref{thm:goodProj}.

\begin{theorem}\label{thm:goodProjReg}
Let $f:\reals^{m\times n}\rightarrow\reals$ be $\beta$-smooth and convex. Assume $\nabla{}f$ is non-zero over $\mX^*$ and fix some $\X^*\in\mX^*$. Let $r$ denote the multiplicity of $\sigma_1(\nabla{}f(\X^*))$, and let $\mu_1,\mu_2,\dots,\mu_{\min\{m,n\}}$ denote the singular values of $\nabla{}f(\X^*)$ (including multiplicities). Then, for any $\eta >0$ 
it holds that
\begin{eqnarray}\label{eq:goodProjReg1}
\delta(\X^*,\eta,r) \geq  \frac{\eta{}(\mu_1-\mu_{r+1})}{1+\eta\beta}.
\end{eqnarray}
More generally, for any $r'\in\{r,\dots,\min\{m,n\}-1\}$ and any $\eta >0$, it holds that 
\begin{eqnarray}\label{eq:goodProjReg2}
\delta(\X^*,\eta,r') \geq  \frac{\eta(\mu_1-\mu_{r'+1})}{1+\eta\beta}.
\end{eqnarray}
Moreover, for any $r'\in\{r,\dots,\min\{m,n\}-r\}$ and any $\eta >0$, it holds that
\begin{eqnarray}\label{eq:goodProjReg3}
\delta(\X^*,\eta,r'+r-1) \geq \frac{\sqrt{r}\eta(\mu_1-\mu_{r'+1})}{1+\eta\beta}.
\end{eqnarray}
\end{theorem}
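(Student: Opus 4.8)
The plan is to mirror the proof of Theorem~\ref{thm:goodProj}, exploiting the fact that the rank of the soft-thresholding operator admits an even simpler characterization than the Euclidean projection. First I would establish the regularized analogue of Lemma~\ref{lem:eigsOfOptGrad}. Since $\X^*$ minimizes $f(\X)+\Vert{\X}\Vert_*$, the optimality condition reads $-\nabla{}f(\X^*)\in\partial\Vert{\X^*}\Vert_*$, and writing the compact SVD $\X^*=\sum_{i=1}^{r^*}\sigma_i(\X^*)\u_i\v_i^{\top}$ with $r^*=\rank(\X^*)$, this means $-\nabla{}f(\X^*)=\sum_{i=1}^{r^*}\u_i\v_i^{\top}+\W$, where $\W$ has its row- and column-spaces orthogonal to those of $\X^*$ and satisfies $\Vert{\W}\Vert_{\mathrm{op}}\le 1$. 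Consequently $\mu_1=\sigma_1(\nabla{}f(\X^*))=1$, the top singular vectors of $\nabla{}f(\X^*)$ coincide (up to sign) with those of $\X^*$, and $r=\#\sigma_1(\nabla{}f(\X^*))\ge r^*$. Moreover the singular values of $\Y^*:=\X^*-\eta\nabla{}f(\X^*)$ split cleanly: they are $\sigma_i(\X^*)+\eta$ for $i\le r^*$ (each strictly larger than $\eta$) and $\eta\mu_k$ for $k>r^*$; in particular $\sigma_{r'+1}(\Y^*)=\eta\mu_{r'+1}$ for every $r'\ge r^*$.

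The second ingredient is the rank criterion for soft-thresholding, which here replaces Lemma~\ref{lem:traceNormProj}: since $\mT_\eta$ subtracts $\eta$ from each singular value and clips at zero, $\rank(\projgrad_\eta(\X))\le r'$ holds if and only if $\sigma_{r'+1}(\Y)\le\eta$, where $\Y:=\X-\eta\nabla{}f(\X)$. This is markedly cleaner than the constrained case and is exactly why the resulting radius carries no $(1+1/\sqrt{r})$ factor. It then remains to upper bound $\sigma_{r'+1}(\Y)$ in terms of $\Vert{\X-\X^*}\Vert_F$. By Weyl's inequality $\sigma_{r'+1}(\Y)\le\sigma_{r'+1}(\Y^*)+\sigma_1(\Y-\Y^*)$, and, exactly as in \eqref{eq:goodProj:3}, $\beta$-smoothness gives $\sigma_1(\Y-\Y^*)\le\Vert{\Y-\Y^*}\Vert_F\le(1+\eta\beta)\Vert{\X-\X^*}\Vert_F$. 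Combining with $\sigma_{r'+1}(\Y^*)=\eta\mu_{r'+1}$, the criterion $\sigma_{r'+1}(\Y)\le\eta$ is guaranteed whenever $(1+\eta\beta)\Vert{\X-\X^*}\Vert_F\le\eta(1-\mu_{r'+1})=\eta(\mu_1-\mu_{r'+1})$, since $\mu_1=1$; this yields both \eqref{eq:goodProjReg1} and \eqref{eq:goodProjReg2}.

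For the over-parameterization bound \eqref{eq:goodProjReg3} I would invoke the general form of Weyl's inequality used in \eqref{eq:goodProj:6}: taking $r''=r'+r-1$ gives $\sigma_{r''+1}(\Y)\le\sigma_{r'+1}(\Y^*)+\sigma_{r}(\Y-\Y^*)$, and bounding $\sigma_r(\Y-\Y^*)\le\frac{1}{\sqrt{r}}\Vert{\Y-\Y^*}\Vert_F\le\frac{1}{\sqrt{r}}(1+\eta\beta)\Vert{\X-\X^*}\Vert_F$ replaces the coefficient $1$ by $1/\sqrt{r}$. Imposing $\sigma_{r''+1}(\Y)\le\eta$ then permits the enlarged radius $\sqrt{r}\eta(\mu_1-\mu_{r'+1})/(1+\eta\beta)$, as claimed.

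I expect the only genuinely non-routine step to be the structural lemma of the first paragraph, namely extracting from the subdifferential of the nuclear norm that $\sigma_1(\nabla{}f(\X^*))=1$ with its top singular space aligned to that of $\X^*$, and hence the clean split of the spectrum of $\Y^*$. Once this is in place, the Weyl-inequality and smoothness estimates are identical to those already carried out for Theorem~\ref{thm:goodProj}, only simpler, because the soft-threshold rank test involves a single singular value rather than a partial sum.
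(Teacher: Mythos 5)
Your proposal is correct and follows essentially the same route as the paper's proof: establish that $\mu_1=1$ and that the top singular space of $\nabla{}f(\X^*)$ is aligned with that of $\X^*$, so that $\sigma_{r'+1}(\Y^*)=\eta\mu_{r'+1}$, then combine the soft-thresholding rank criterion $\sigma_{r'+1}(\Y)\leq\eta$ with Weyl's inequality and $\beta$-smoothness, and use the generalized Weyl inequality for \eqref{eq:goodProjReg3}. The only cosmetic difference is that the paper derives the structural facts from the prox fixed-point identity $\X^*=\mT_{\eta}(\X^*-\eta\nabla{}f(\X^*))$ together with the regularized extension of Lemma \ref{lem:eigsOfOptGrad}, whereas you extract them from the nuclear-norm subdifferential; these are equivalent optimality characterizations.
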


\begin{proof}
Since $\X^*$ is an optimal solution it follows that 
\begin{eqnarray*}
\projgrad_{\eta}(\X^*) = \mT_{\eta}(\X^*-\eta\nabla{}f(\X^*)) = \X^*. 
\end{eqnarray*}

We first note that Lemma \ref{lem:eigsOfOptGrad} also applies for the regularized problem, Problem \eqref{eq:regProb} (extending its proof to the regularized case is straightforward). 
Thus, we can write the SVD of $\X^*$ as $\X^* = \sum_{i=1}^{r^*}\sigma_i\u_i\v_i$, where each pair $(-\u_i,\v_i)$ is a singular vectors pair corresponding to the top singular value of $\nabla{}f(\X^*)$ - $\sigma_1(\nabla{}f(\X^*)$. Thus, we have that
\begin{eqnarray*}
\X^* &=& \sum_{i=1}^{r^*}\sigma_i\u_i\v_i^{\top} = \mT_{\eta}\left({\sum_{i=1}^{r^*}\sigma_i\u_i\v_i^{\top} - \eta\sum_{i=1}^{\min\{m,n\}}\mu_i(-\u_i\v_i^{\top})}\right) \\
&=& \mT_{\eta}\left({\sum_{i=1}^{\min\{m,n\}}(\sigma_i+\eta\mu_i)\u_i\v_i^{\top}}\right) = \sum_{i=1}^{\min\{m,n\}}\max\{\sigma_i+\eta\mu_i-\eta,0\}\u_i\v_i^{\top}. 
\end{eqnarray*}

Thus, it follows that
\begin{eqnarray*}
\forall i\in\{1,\dots,r^*\}: ~ \sigma_i = \sigma_i + \eta\mu_i-\eta \quad &\Longrightarrow& \quad \forall i\in\{1,\dots,r^*\}:~\mu_i = 1 \\
&\Longrightarrow& \quad \forall i\in\{1,\dots,r\}:~\mu_i = 1.
\end{eqnarray*}

In particular, it follows that for all $i>r$, $\mu_i < 1$.

Now, given some point $\X$, let us denote $\Y := \X-\eta\nabla{}f(\X)$, and let us denote the singular values of $\Y$ by $\gamma_1,\dots,\gamma_{\min\{m,n\}}$.

Using the notation $\Y^* := \X^* - \eta\nabla{}f(\X^*)$, it holds via Weyl's inequality that for all $r' \geq r$
\begin{eqnarray*}
\gamma_{r'+1} \leq  \sigma_{r'+1}(\Y^*) + \sigma_1(\Y-\Y^*) \leq \eta\mu_{r'+1} + (1+\eta\beta)\Vert{\X-\X^*}\Vert_F,
\end{eqnarray*}
where the bound on $\sigma_1(\Y-\Y^*)$ follows as in the proof of Theorem \ref{thm:goodProj}.

Clearly, by definition of $\mT_{\eta}(\cdot)$ it follows that if $\gamma_{r'+1} < \eta$ then $\rank\left({\mT_{\eta}(\Y)}\right) \leq r'$.

Thus, it follows that if $\Vert{\X-\X^*}\Vert_F \leq \frac{\eta(1-\mu_{r'+1})}{1+\eta\beta} = \frac{\eta(\mu_1-\mu_{r'+1})}{1+\eta\beta}$, then $\mT_{\eta}\left({\X-\eta\nabla{}f(\X)}\right)$ has rank at most $r'$. This proves \eqref{eq:goodProjReg1}, \eqref{eq:goodProjReg2}.

Alternatively, for any $r'' \geq r'+r-1$, using Weyl's inequality, we have that
\begin{eqnarray*}
\gamma_{r''+1} &\leq & \sigma_{r'+1}(\Y^*) + \sigma_{r''-r'+1}(\Y-\Y^*) = \eta\mu_{r'+1} + \sqrt{\sigma^2_{r''-r'+1}(\Y-\Y^*)} \\
&\leq &\eta\mu_{r'+1} + \sqrt{\frac{1}{r''-r'+1}\Vert{\Y-\Y^*}\Vert_F^2} = \eta\mu_{r'+1} + \frac{1}{\sqrt{r''-r'+1}}\Vert{\Y-\Y^*}\Vert_F \\
&\leq &\eta\mu_{r'+1} + \frac{1+\eta\beta}{\sqrt{r''-r'+1}}\Vert{\X-\X^*}\Vert_F
\end{eqnarray*}

Thus, if $\Vert{\X-\X^*}\Vert_F \leq \frac{\eta\sqrt{r''-r'+1}(\mu_1-\mu_{r'+1})}{1+\eta\beta}$ it follows that $\mT_{\eta}\left({\X-\eta\nabla{}f(\X)}\right)$ has rank at most $r''$. This proves \eqref{eq:goodProjReg3} when taking $r''=r'+r-1$.
\end{proof}

\section{Optimization with Positive Semidefinite Matrices}\label{sec:SDP}

In this section we consider the related problem of optimization over positive semidefinite matrices with unit trace. Towards this end we define the spectrahedron: $\mS_n := \{\X\in\mbS^nn ~ |~ \X\succeq 0,~\trace(\X) = 1\}$, where $\mbS^n$ denotes the space of $n\times{}n$ real symmetric matrices, and consider the following optimization problem:
\begin{eqnarray*}
\min_{\X\in\mS_n}f(\X),
\end{eqnarray*}
where, as before, $f:\reals^{n\times n}\rightarrow\reals$ is assumed to be convex and $\beta$-smooth. For ease of presentation, throughout this section we assume that the gradient vector is always a symmetric matrix, i.e., $\nabla{}f(\X) \in\mbS^n$ for all $\X\in\mbS^n$.

In this section we again overload notation and consider the projected-gradient mapping w.r.t. $\mS_n$:
\begin{eqnarray*}
\projgrad_{\eta}(\X) := \Pi_{\mS_n}[\X-\eta\nabla{}f(\X)].
\end{eqnarray*}

Similarly, for any optimal solution $\X^*\in\mX^*$, step-size $\eta$ and integer $r\in\{1,\dots,n\}$,
we also overload the notation $\delta(\X^*,\eta,r)$ and define it as
\begin{eqnarray*}
\sup\delta\geq 0 \quad \textrm{s.t.} \quad \forall\X\in\ball(\X^*,\delta):\rank\left({\projgrad_{\eta}(\X)}\right)\leq r.
\end{eqnarray*}

We begin by recalling the structure of the Euclidean projection onto $\mS_n$.

\begin{lemma}[projection onto the spectrahedron]\label{lem:SDPProj}
Let $\X\in\sym^n$ and consider its eigendecomposition $\X = \sum_{i=1}^n\lambda_i\v_i\v_i^{\top}$, where $\lambda_1 \geq \lambda_2 \dots\geq \lambda_n$. The Euclidean projection of $\X$ onto $\mS_n$ is given by
\begin{eqnarray*}
\Pi_{\mS_n}[\X] = \sum_{i=1}^{n}\max\{0,\lambda_i-\lambda\}\v_i\v_i^{\top},
\end{eqnarray*}
where $\lambda\in\reals$ is the unique solution to the equation $\sum_{i=1}^{n}\max\{0,\lambda_i-\lambda\} = 1$.

Moreover, there exists $r\in\{1,\dots,n-1\}$ such that $\sum_{i=1}^r\lambda_i \geq 1+r\lambda_{r+1}$ if and only if $\rank\left({\Pi_{\mS}[\X]}\right) \leq r$. 
\end{lemma}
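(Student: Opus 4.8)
The plan is to prove this in close analogy with Lemma~\ref{lem:traceNormProj}, reducing the matrix projection to a Euclidean projection of the eigenvalue vector onto the unit simplex and then reading off both the thresholding formula and the rank bound. Writing $\X = \U\Lambda\U^{\top}$ with $\Lambda = \textrm{diag}(\lambda_1,\dots,\lambda_n)$, the first step is to argue that the minimizer of $\frac{1}{2}\Vert{\Y-\X}\Vert_F^2$ over $\mS_n$ shares its eigenvectors with $\X$. For this I would take any feasible $\Y$, set $\tilde{\Y} := \U^{\top}\Y\U$, and note that $\tilde{\Y}$ is again feasible (both $\Y\succeq 0$ and $\trace(\Y)=1$ are invariant under orthogonal conjugation) while $\Vert{\Y-\X}\Vert_F = \Vert{\tilde{\Y}-\Lambda}\Vert_F$. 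Expanding $\Vert{\tilde{\Y}-\Lambda}\Vert_F^2 = \sum_{i\neq j}\tilde{Y}_{ij}^2 + \sum_i(\tilde{Y}_{ii}-\lambda_i)^2$ shows that the off-diagonal entries only increase the objective, so the optimum is attained by a diagonal $\tilde{\Y}$; feasibility then forces its diagonal $d=(d_1,\dots,d_n)$ to lie in the unit simplex. This reduces the problem to minimizing $\frac{1}{2}\sum_i(d_i-\lambda_i)^2$ subject to $d\geq 0$ and $\sum_i d_i = 1$.

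For this simplex projection I would write the KKT conditions: stationarity gives $d_i = \lambda_i + \theta + \nu_i$ with equality multiplier $\theta$ and nonnegativity multipliers $\nu_i\geq 0$, and complementary slackness $\nu_i d_i = 0$ yields the closed form $d_i = \max\{0,\lambda_i - \lambda\}$ upon setting $\lambda := -\theta$. Uniqueness of $\lambda$ follows because $g(\lambda) := \sum_{i=1}^n\max\{0,\lambda_i-\lambda\}$ is continuous, non-increasing, and strictly decreasing wherever it is positive, ranging from arbitrarily large values down to $0$. The hypothesis $\sum_{i:\lambda_i>0}\lambda_i\geq 1$ is exactly the statement $g(0)\geq 1$, which together with $g(\lambda_1)=0$ guarantees that the equation $g(\lambda)=1$ is met at a unique $\lambda\in[0,\lambda_1)$, certifying both that the point to project lies outside $\mS_n$ and that the displayed formula is the genuine projection.

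The rank bound is then the direct analogue of the second part of Lemma~\ref{lem:traceNormProj}. Assuming $\sum_{i=1}^r\lambda_i\geq 1 + r\lambda_{r+1}$, I would evaluate $g$ at the threshold $\lambda_{r+1}$: since $\lambda_i - \lambda_{r+1}\leq 0$ for every $i>r$, we obtain $g(\lambda_{r+1}) = \sum_{i=1}^r(\lambda_i-\lambda_{r+1}) = \sum_{i=1}^r\lambda_i - r\lambda_{r+1}\geq 1$. By the monotonicity of $g$ and $g(\lambda)=1$, this forces $\lambda\geq\lambda_{r+1}$, whence $\max\{0,\lambda_i-\lambda\}=0$ for all $i>r$ and $\rank(\Pi_{\mS_n}[\X])\leq r$. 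The only genuinely non-routine ingredient is the eigenvector-alignment reduction of the first paragraph; everything downstream is the simplex computation together with a monotonicity argument identical in spirit to Lemma~\ref{lem:traceNormProj}. Since the paper already treats the analogous statement for the trace-norm ball as well known, I would present the reduction concisely and, if brevity is preferred, simply cite the standard fact that Euclidean projection onto $\mS_n$ diagonalizes in the eigenbasis of $\X$.
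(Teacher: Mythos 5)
Your proof is correct and, on the only part the paper actually proves (the rank bound), it follows essentially the same route: both arguments show that $\sum_{i=1}^r\lambda_i \geq 1+r\lambda_{r+1}$ forces the threshold to satisfy $\lambda \geq \lambda_{r+1}$ — the paper via a trace-exceeds-one contradiction, you by evaluating $g$ at $\lambda_{r+1}$ and using strict monotonicity of $g$ where it is positive, which is the same computation in different clothing. The only difference is that the paper dismisses the first part as a well-known fact, whereas you supply a sound, self-contained proof of it (orthogonal-conjugation reduction to the eigenbasis plus KKT for the simplex projection), and your closing remark that one could instead simply cite this standard fact matches the paper's treatment.
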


\begin{proof}
The first part of the Lemma is a well-known fact, see for instance \cite{Beck17}. 

To see why the second part holds, we first observe that in case $\sum_{i=1}^r\lambda_i \geq 1+r\lambda_{r+1}$, it must hold that $\lambda \geq \lambda_{r+1}$. This is true since if $\lambda < \lambda_{r+1}$ we have that
\begin{eqnarray*}
\trace\left({\Pi_{\mS_n}[\X]}\right) &=& \sum_{i=1}^{n}\max\{0,\lambda_i-\lambda\} \geq \sum_{i=1}^{r}\max\{0,\lambda_i-\lambda\} \\
&= &\sum_{i=1}^{r}(\lambda_i-\lambda) > (1+r\lambda_{r+1}) - r\lambda_{r+1} = 1.
\end{eqnarray*}
Thus, it follows that $\lambda \geq \lambda_{r+1}$. However, then it clearly follows that for all $i\geq r+1$, $\lambda_i-\lambda \leq 0$ which implies that $\rank\left({\Pi_{\mS_n}[\X]}\right) \leq r$. The reversed direction follows from the same reasoning.
\end{proof}

The following lemma is analogues to Lemma \ref{lem:eigsOfOptGrad} and its proof (which we omit) follows the same reasoning.

\begin{lemma}\label{lem:eigsOfOptGradSDP}
Let $\X^*\in\mX^*$ be any optimal solution and write its eigendecomposition as $\X^* = \sum_{i=1}^r\lambda_i\v_i\v_i^{\top}$. Then, the gradient vector $\nabla{}f(\X^*)$ admits an eigendecomposition such that the set of vectors $\{\v_i\}_{i=1}^r$ is a set of top eigen-vectors of $(-\nabla{}f(\X^*))$ which corresponds to the eigenvalue $\lambda_1(-\nabla{}f(\X^*)) = -\lambda_n(\nabla{}f(\X^*))$.
\end{lemma}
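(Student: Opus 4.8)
The plan is to mirror the proof of Lemma \ref{lem:eigsOfOptGrad}, replacing the singular-vector perturbation used there by a rank-one eigenvector perturbation tailored to the trace constraint. First I would reduce the claim to a Rayleigh-quotient statement: it suffices to show that for every $i\in\{1,\dots,r\}$ the unit eigenvector $\v_i$ of $\X^*$ satisfies $\v_i^\top(-\nabla{}f(\X^*))\v_i = \lambda_1(-\nabla{}f(\X^*))$. Indeed, a unit vector attaining the maximal Rayleigh quotient of a symmetric matrix must lie in its top eigenspace, so each such $\v_i$ is then a top eigenvector of $-\nabla{}f(\X^*)$; since the $\v_i$ are orthonormal, they span an $r$-dimensional subspace of that top eigenspace and can therefore be completed to a full eigendecomposition of $-\nabla{}f(\X^*)$ in which $\{\v_i\}_{i=1}^r$ appear among the top eigenvectors, which is exactly the assertion. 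The case $\nabla{}f(\X^*)=\mathbf{0}$ is trivial (then $\lambda_1(-\nabla{}f(\X^*))=-\lambda_n(\nabla{}f(\X^*))=0$ and every vector is an eigenvector), so henceforth I would assume $\nabla{}f(\X^*)\neq\mathbf{0}$.

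Next I would argue by contradiction. Suppose for some $i$ that $\v_i^\top(-\nabla{}f(\X^*))\v_i < \lambda_1(-\nabla{}f(\X^*))$, and let $\w$ be a unit top eigenvector of $-\nabla{}f(\X^*)$, so that $\w^\top(-\nabla{}f(\X^*))\w = \lambda_1(-\nabla{}f(\X^*))$. For $\alpha\in(0,\lambda_i]$ set $\X_\alpha := \X^* - \alpha\v_i\v_i^\top + \alpha\w\w^\top$. The main point requiring care — and the one genuinely new ingredient relative to Lemma \ref{lem:eigsOfOptGrad}, where feasibility followed from the triangle inequality for the nuclear norm — is verifying $\X_\alpha\in\mS_n$. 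The trace is preserved exactly, since $\trace(\X_\alpha) = \trace(\X^*) + \alpha(\Vert\w\Vert^2 - \Vert\v_i\Vert^2) = 1$ as both vectors are unit. Positive semidefiniteness follows because $\X^* - \alpha\v_i\v_i^\top = \sum_{j\neq i}\lambda_j\v_j\v_j^\top + (\lambda_i-\alpha)\v_i\v_i^\top\succeq 0$ whenever $\alpha\le\lambda_i$ (recall $\lambda_i>0$ as these are the nonzero eigenvalues of $\X^*\succeq 0$), while $\alpha\w\w^\top\succeq 0$, and the PSD cone is closed under addition; crucially this argument does not require $\w$ to be orthogonal to the $\v_j$.

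Finally I would compute the first-order change along this feasible direction, obtaining $(\X_\alpha - \X^*)\bullet\nabla{}f(\X^*) = \alpha\bigl(\w^\top\nabla{}f(\X^*)\w - \v_i^\top\nabla{}f(\X^*)\v_i\bigr)$. By the choice of $\w$ the first term equals $-\lambda_1(-\nabla{}f(\X^*))$, whereas the contradiction hypothesis makes the second term strictly larger than $-\lambda_1(-\nabla{}f(\X^*))$, so the whole inner product is strictly negative. This contradicts the first-order optimality condition for minimizing a convex $f$ over the convex set $\mS_n$, namely $(\X-\X^*)\bullet\nabla{}f(\X^*)\ge 0$ for all $\X\in\mS_n$. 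Hence $\v_i^\top(-\nabla{}f(\X^*))\v_i = \lambda_1(-\nabla{}f(\X^*))$ for every $i$, which completes the reduction and the proof. I expect the only delicate step to be the PSD-feasibility check above; everything else is a direct transcription of the trace-norm argument. (As an even cleaner alternative, one could avoid the contradiction entirely: optimality gives $\X^*\bullet\nabla{}f(\X^*)=\min_{\X\in\mS_n}\X\bullet\nabla{}f(\X^*)=\lambda_n(\nabla{}f(\X^*))$, and writing $\X^*\bullet\nabla{}f(\X^*)=\sum_{i=1}^r\lambda_i\,\v_i^\top\nabla{}f(\X^*)\v_i$ as a convex combination of Rayleigh quotients — each at least $\lambda_n(\nabla{}f(\X^*))$ — forces every term with positive weight to equal the minimum.)
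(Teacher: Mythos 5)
Your proof is correct and is precisely the argument the paper intends: the paper omits this proof, stating only that it follows the same reasoning as Lemma \ref{lem:eigsOfOptGrad}, and your contradiction argument via the feasible perturbation $\X_\alpha = \X^* - \alpha\v_i\v_i^{\top} + \alpha\w\w^{\top}$ (with trace preserved and positive semidefiniteness guaranteed by $\alpha\leq\lambda_i$) is exactly that reasoning transplanted to the spectrahedron. The two genuinely new ingredients you supply --- the PSD feasibility check replacing the nuclear-norm triangle inequality, and the Rayleigh-quotient reduction showing that $\v_i^{\top}(-\nabla{}f(\X^*))\v_i = \lambda_1(-\nabla{}f(\X^*))$ forces $\v_i$ into the top eigenspace --- are both handled correctly.
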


The following theorem is analogues to Theorem \ref{thm:goodProj}.
\begin{theorem}\label{thm:goodProjSDP}
Let $f:\mbS^n\rightarrow\reals$ be $\beta$-smooth and convex. Assume $\nabla{}f$ is non-zero over $\mS_n$ and
fix some $\X^*\in\mX^*$. Let $r$ denote the multiplicity of $\lambda_n(\nabla{}f(\X^*))$, and let $\mu_1,\dots,\mu_n$ denote the eigenvalues of $\nabla{}f(\X^*)$ in non-increasing order. Then, for any $\eta >0$
it holds that
\begin{eqnarray}\label{eq:goodProjSDPres1}
\delta(\X^*,\eta,r) \geq  \frac{\eta(\mu_{n-r}-\mu_n)}{(1+1/\sqrt{r})(1+\eta\beta)}.
\end{eqnarray}

More generally, for  any $r'\in\{r,\dots,n-1\}$ and any $\eta >0$, it holds that
\begin{eqnarray}\label{eq:goodProjSDPres2}
\delta(r',\eta) \geq  \frac{\eta(\mu_{n-r'}-\mu_n)}{(1+1/\sqrt{r})(1+\eta\beta)}.
\end{eqnarray}

Moreover, for any $r'\in\{r,\dots,n-r\}$ and any $\eta>0$, it holds that
\begin{eqnarray}\label{eq:goodProjSDPres3}
\delta(\X^*,\eta,r'+r-1) \geq \frac{\sqrt{r}\eta(\mu_{n-r'}-\mu_n)}{2(1+\eta\beta)}.
\end{eqnarray}
\end{theorem}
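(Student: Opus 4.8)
The plan is to mirror the structure of the proof of Theorem \ref{thm:goodProj} for the trace-norm ball, adapting each ingredient to the spectrahedron setting. The key structural change is that the relevant spectral quantities are now signed eigenvalues rather than singular values, and the low-rank certificate from Lemma \ref{lem:SDPProj} is phrased in terms of the \emph{top} eigenvalues $\sum_{i=1}^r\lambda_i \geq 1 + r\lambda_{r+1}$. Accordingly, I first use Lemma \ref{lem:eigsOfOptGradSDP} to pin down the eigenvalues of $\Y^* := \X^*-\eta\nabla{}f(\X^*)$. Writing $\X^* = \sum_{i=1}^{r^*}\lambda_i(\X^*)\v_i\v_i^{\top}$ and recalling that the $\v_i$ are top eigenvectors of $-\nabla{}f(\X^*)$ corresponding to the eigenvalue $-\mu_n$, the top $r^*$ eigenvalues of $\Y^*$ are $\lambda_i(\X^*) - \eta\mu_n$, while the remaining eigenvalues are $-\eta\mu_j$ for the $j$ indexing eigenvectors orthogonal to the range of $\X^*$. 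Since $\nabla{}f\neq 0$ forces $\trace(\X^*)=1$, the top $r^*$ eigenvalues of $\Y^*$ sum to $1 - \eta r^*\mu_n$, and the eigenvalue in slot $r'+1$ equals $-\eta\mu_{n-(r'+1-r^*)}$; matching the sign convention, the gap quantity that appears is $\mu_{n-r'}-\mu_n$ rather than $\mu_1-\mu_{r'+1}$.

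Next I would introduce the analogue of the slack quantity, defining $\xi(r') := \sum_{i=1}^{r}\lambda_i(\Y^*) - r\,\lambda_{r'+1}(\Y^*) - 1$ and computing, using the eigenvalue identities above, that $\xi(r') = \eta r(\mu_{n-r'} - \mu_n)$, which plays the same role as Eq. \eqref{eq:goodProj:1}. For a general feasible $\X$ with $\Y := \X-\eta\nabla{}f(\X)$ and eigenvalues $\gamma_1\geq\dots\geq\gamma_n$, I would perturb: a lower bound on $\sum_{i=1}^r \gamma_i$ follows from Ky Fan's inequality exactly as in Eq. \eqref{eq:goodProj:2}, giving $\sum_{i=1}^r\gamma_i \geq \sum_{i=1}^r \lambda_i(\Y^*) - \sqrt{r}\,\Vert{\Y-\Y^*}\Vert_F$, and an upper bound on $\gamma_{r'+1}$ follows from Weyl's inequality as in Eq. \eqref{eq:goodProj:3}. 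The only arithmetic subtlety is that here the eigenvalue perturbation bound $\Vert{\Y-\Y^*}\Vert_F \leq (1+\eta\beta)\Vert{\X-\X^*}\Vert_F$ is identical (it rests only on $\beta$-smoothness and the triangle inequality, which are blind to the symmetric/PSD structure), so the same $(1+\eta\beta)$ factor propagates. Combining these two bounds yields $\sum_{i=1}^{r'}\gamma_i - r'\gamma_{r'+1} \geq \sum_{i=1}^{r}\gamma_i - r\gamma_{r'+1} \geq 1 + \xi(r') - (r+\sqrt{r})(1+\eta\beta)\Vert{\X-\X^*}\Vert_F$, and requiring the right-hand side to be at least $1$ gives the radius in \eqref{eq:goodProjSDPres1} and \eqref{eq:goodProjSDPres2} via Lemma \ref{lem:SDPProj}.

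For the over-parameterized bound \eqref{eq:goodProjSDPres3} I would replace the crude Weyl bound on $\gamma_{r''+1}$ with the sharper version $\gamma_{r''+1} \leq \lambda_{r'+1}(\Y^*) + \lambda_{r''-r'+1}(\Y-\Y^*)$ and then bound $\lambda_{r''-r'+1}(\Y-\Y^*) \leq \frac{1}{\sqrt{r''-r'+1}}\Vert{\Y-\Y^*}\Vert_F$, exactly paralleling Eq. \eqref{eq:goodProj:6}; specializing to $r'' = r'+r-1$ collapses the coefficient to $2\sqrt{r}$ and yields the claimed radius. I expect the main obstacle to be purely bookkeeping: getting the eigenvalue indexing right when passing from the \emph{smallest} eigenvalue $\mu_n$ of the gradient (which governs the optimality structure on the spectrahedron) to the \emph{top} eigenvalues of $\Y^*$ that enter the low-rank certificate of Lemma \ref{lem:SDPProj}. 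One must also verify the edge condition $\sum_{i:\lambda_i>0}\lambda_i \geq 1$ needed to invoke Lemma \ref{lem:SDPProj}, which holds because $\Y^*$ already has trace-$1$ content concentrated in its top block; beyond this indexing care, every analytic step is a direct transcription of the corresponding step in Theorem \ref{thm:goodProj}, with singular values replaced by eigenvalues and Ky Fan/Weyl invoked for symmetric matrices.
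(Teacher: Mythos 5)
Your proposal is correct and takes essentially the same route as the paper's proof: Lemma \ref{lem:eigsOfOptGradSDP} to identify the spectrum of $\Y^* = \X^*-\eta\nabla f(\X^*)$, the slack $\xi(r')=\eta r(\mu_{n-r'}-\mu_n)$, Ky Fan plus Weyl with the $(1+\eta\beta)$ smoothness factor, Lemma \ref{lem:SDPProj} to certify the rank, and the sharper Weyl bound $\lambda_{r''-r'+1}(\Y-\Y^*)\leq \Vert{\Y-\Y^*}\Vert_F/\sqrt{r''-r'+1}$ specialized at $r''=r'+r-1$ to get the $2\sqrt{r}$ coefficient. One indexing slip worth noting: the eigenvalue of $\Y^*$ in slot $r'+1$ is $-\eta\mu_{n-r'}$ (the paper's $\lambda_j=-\eta\mu_{n-j+1}$ for $j>r^*$), not $-\eta\mu_{n-(r'+1-r^*)}$ as you wrote; since you nonetheless use the correct value $\mu_{n-r'}$ when computing $\xi(r')$ and in the stated radii, nothing downstream is affected.
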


\begin{proof}
Fix a step-size $\eta>0$. Denote $\Y^* := \X^*-\eta\nabla{}f(\X^*)$ and let $\lambda_1,\dots,\lambda_n$ denote the eigenvalues of $\Y^*$ in non-increasing order. 

Let us denote by $r^* := \rank(\X^*)$.
It follows from Lemma \ref{lem:eigsOfOptGradSDP} that
\begin{eqnarray*}
\forall i\in[r^*]: \quad \lambda_i &=& \lambda_i(\X^*) - \eta\mu_n; \\
 \forall j>r^*: \quad \lambda_j &=& -\eta\mu_{n-j+1}.
\end{eqnarray*}

Since $\sum_{i=1}^{r^*}\lambda_i(\X^*) =\sum_{i=1}^{r}\lambda_i(\X^*) = \trace(\X^*)=1$, we have that
\begin{eqnarray}\label{eq:goodProjSDP1}
\xi(r') := \sum_{i=1}^r\lambda_i - r\cdot\lambda_{r'+1} - 1 
= 1-\eta{}r\mu_n  + r\eta\mu_{n-r'}-1 = \eta{}r(\mu_{n-r'}-\mu_n).
\end{eqnarray}

Now, given a matrix $\X$, denote $\Y := \X-\eta\nabla{}f(\X)$ and let $\gamma_1,\dots\gamma_n$ denote the eigenvalues of $\Y$ in non-increasing order. It holds that
\begin{eqnarray}\label{eq:goodProjSDP2}
\sum_{i=1}^{r}\gamma_i &\underset{(a)}{\geq} &\sum_{i=1}^{r}\lambda_i - \sum_{i=1}^{r}\lambda_i(\Y^*-\Y) \geq \sum_{i=1}^{r}\lambda_i -\sqrt{r\sum_{i=1}^{r}\lambda_i^2(\Y-\Y^*)} \nonumber \\
&\geq& \sum_{i=1}^{r}\lambda_i -\sqrt{r\sum_{i=1}^{n}\lambda_i^2(\Y-\Y^*)} = \sum_{i=1}^{r}\lambda_i  -\sqrt{r}\Vert{\Y-\Y^*}\Vert_F \nonumber\\
&=& \sum_{i=1}^{r}\lambda_i - \sqrt{r}\Vert{\X-\eta\nabla{}f(\X) - \X^* + \eta\nabla{}f(\X^*)}\Vert_F \nonumber\\
&\geq& \sum_{i=1}^{r}\lambda_i - \sqrt{r}\left({\Vert{\X-\X^*}\Vert_F + \eta\Vert{\nabla{}f(\X) - \nabla{}f(\X^*)}\Vert_F}\right) \nonumber\\
&\underset{(b)}{\geq}& \sum_{i=1}^{r}\lambda_i - \sqrt{r}(1+\eta\beta)\Vert{\X-\X^*}\Vert_F,
\end{eqnarray}
where (a) follows from Ky Fan's inequality for the eigenvalues, and (b) follows from the $\beta$-smoothness of $f$.

Also, similarly, it holds that
\begin{eqnarray}\label{eq:goodProjSDP3}
\gamma_{r'+1} \leq \lambda_{r'+1} + \lambda_1(\Y-\Y^*) \leq \lambda_{r'+1} + (1+\eta\beta)\Vert{\X-\X^*}\Vert_F. 
\end{eqnarray}

Combining Eq. \eqref{eq:goodProjSDP1}, \eqref{eq:goodProjSDP2}, \eqref{eq:goodProjSDP3}, we have that
\begin{eqnarray}\label{eq:goodProjSDP4}
\sum_{i=1}^{r'}\gamma_i - r'\gamma_{r'+1} &\geq & \sum_{i=1}^{r}\gamma_i - r\gamma_{r'+1} \nonumber \\
&\geq & \sum_{i=1}^{r}\lambda_i - \sqrt{r}(1+\eta\beta)\Vert{\X-\X^*}\Vert_F - r\left({\lambda_{r'+1} + (1+\eta\beta)\Vert{\X-\X^*}\Vert_F}\right) \nonumber\\
&=& \sum_{i=1}^{r}(\lambda_i-\lambda_{r'+1}) - (r+\sqrt{r})(1+\eta\beta)\Vert{\X-\X^*}\Vert_F\nonumber \\
&=& 1 + \xi(r') - (r+\sqrt{r})(1+\eta\beta)\Vert{\X-\X^*}\Vert_F.
\end{eqnarray}

Thus, it follows that if $\X$ satisfies:
\begin{eqnarray*}
\Vert{\X-\X^*}\Vert_F \leq  \frac{\xi(r')}{(r+\sqrt{r})(1+\eta\beta)} = \frac{\eta(\mu_{n-r'}-\mu_n)}{(1+1/\sqrt{r})(1+\eta\beta)},
\end{eqnarray*}
we have that $\sum_{i=1}^{r'}\gamma_i - r'\gamma_{r'+1} \geq 1$, which implies via Lemma \ref{lem:SDPProj}, that $\Pi_{\mS_n}\left[{\X-\eta\nabla{}f(\X)}\right]$ has rank at most $r'$. This proves \eqref{eq:goodProjSDPres1}, \eqref{eq:goodProjSDPres2}.

Alternatively, for any $r'' \geq r'+r-1$, using the more general version of Weyl's inequality, we can replace Eq. \eqref{eq:goodProjSDP3} with
\begin{eqnarray}\label{eq:goodProjSDP5}
\gamma_{r''+1} &\leq &\lambda_{r'+1} + \lambda_{r''-r'+1}(\Y-\Y^*) \leq \lambda_{r'+1} + \sqrt{\lambda_{r''-r'+1}^2(\Y-\Y^*)} \nonumber \\
&\leq & \lambda_{r'+1} + \sqrt{\frac{1}{r''-r'+1}\Vert{\Y-\Y^*}\Vert_F^2} \nonumber \\
&\leq & \lambda_{r'+1} + \frac{1}{\sqrt{r''-r'+1}}(1+\eta\beta)\Vert_F.
\end{eqnarray}

Thus, similarly to Eq. \eqref{eq:goodProjSDP4}, but replacing Eq. \eqref{eq:goodProjSDP3} with Eq. \eqref{eq:goodProjSDP5}, we obtain
\begin{eqnarray*}
\sum_{i=1}^{r''}\gamma_i - r''\gamma_{r''+1} &\geq &\sum_{i=1}^r\gamma_i - r\gamma_{r''+1} \geq \sum_{i=1}^r\lambda_i - \sqrt{r}(1+\eta\beta)\Vert{\X-\X^*}\Vert_F \\
&& - r\left({\lambda_{r'+1} + \frac{1}{\sqrt{r''-r'+1}}(1+\eta\beta)\Vert_F}\right) \\
&=& \sum_{i=1}^r(\lambda_i - \lambda_{r'+1}) - \left({\sqrt{r} + \frac{r}{\sqrt{r''-r'+1}}}\right)(1+\eta\beta)\Vert{\X-\X^*}\Vert_F \\
&=& 1+ \xi(r') - \left({\sqrt{r} + \frac{r}{\sqrt{r''-r'+1}}}\right)(1+\eta\beta)\Vert{\X-\X^*}\Vert_F.
\end{eqnarray*}

In particular, for $r'' = r' + r -1$, we obtain
\begin{eqnarray*}
\sum_{i=1}^{r''}\gamma_i - r''\gamma_{r''+1} \geq 1 + \xi(r') - 2\sqrt{r}(1+\eta\beta)\Vert{\X-\X^*}\Vert_F.
\end{eqnarray*}

Thus, it follows that if $\X$ satisfies:
\begin{eqnarray*}
\Vert{\X-\X^*}\Vert_F \leq \frac{\xi(r')}{2\sqrt{r}(1+\eta\beta)} = \frac{\sqrt{r}\eta(\mu_{n-r}-\mu_n)}{2(1+\eta\beta)},
\end{eqnarray*}
we have that $\Pi_{\mS_n}\left[{\X-\eta\nabla{}f(\X)}\right]$ has rank at most $r'+r-1$, which proves \eqref{eq:goodProjSDPres3}.
\end{proof}

\section{Motivating Empirical Evidence}\label{sec:empirics}
Our goal in this final section is to provide empirical evidence motivating our theoretical investigation. In particular, focusing on the well-studied low-rank matrix completion problem \cite{Candes09, Recht11, Jaggi10}, our results demonstrate that i) the optimal solution in real-world datasets is indeed low-rank, and ii) standard first-order methods, when initialized in a very simple and efficient way, indeed \textit{converge correctly} using only SVD computations with rank that either matches the rank of the optimal solution or exceeds it by a very small constant (1 or 2 in our experiments). To be clear, by the phrase \textit{converge correctly}, we mean that the methods produce exactly the same iterates as they would have produced when using a full-rank SVD computation on each iteration. This fact is verified on each iteration by checking that the condition stated at the end of  Lemma \ref{lem:traceNormProj} indeed holds.

To be more concrete, we consider the task of low-rank matrix completion in the following convex optimization formulation:

\begin{eqnarray*}
\min_{\Vert{\X}\Vert_*\leq\tau}\{f(\X):=\sum_{(i,j)\in{}S}(\X\bullet\matE_{i,j}-r_{i,j})^2\},
\end{eqnarray*}
where $S\subset[m]\times[n]$ is the set of observed entries and $r_{i,j}$ denotes the observed value.

We apply all algorithms with the following simple initialization scheme. We set the first iterate $\X_0$ to $\X_0 := \widehat{\Pi}^r_{\Vert{\cdot}\Vert_*\leq\tau}[\R]$, where
\begin{eqnarray*}
\R_{i,j} = \left\{ \begin{array}{ll}
        r_{i,j} & \mbox{if $(i,j)\in{}S$};\\
        \frac{1}{\vert{S}\vert}\sum_{(i,j)\in{}S}r_{i,j} & \mbox{if $(i,j)\notin{}S$}.\end{array} \right.
\end{eqnarray*}
In words: we construct a matrix which contains the observed matrix entries and every unobserved entry is set to the mean value of observed entries. We then compute the initialization by projecting the rank-$r$ truncated SVD of this matrix onto the trace-norm ball.

We use two highly popular datasets for the matrix completion problem, namely MovieLens100k ($943\times 1682$ matrix, 100,000 observed entries) and MovieLens1M ($6040\times 3952$ matrix, 1,000,209 observed entries)  \cite{Movielens16}. For each dataset we experiment with different  trace bounds (parameter $\tau$) which naturally influences the optimal value and the rank of the optimal solution.
For every dataset and trace bound $\tau$ we find the optimal mean-square-error ($\frac{1}{\vert{S}\vert}f(\X^*)$), the rank of the optimal solution, the multiplicity of the largest singular value in the gradient vector - $\#\sigma_1(\nabla{}f(\X^*))$, and the spectral gap between this largest singular value and the second largest (not counting multiplicities). These values are found by running any of the methods until a point with negligible dual-gap is reached, that is, we find a point $\X_{\epsilon}$ such that
\begin{eqnarray*}
\max_{\Vert{\V}\Vert_*\leq \tau}(\X_{\epsilon}-\V)\bullet\nabla{}f(\X_{\epsilon}) \leq \epsilon.
\end{eqnarray*}
Since $f$ is convex, this implies in particular that $f(\X_{\epsilon})-f(\X^*) \leq \epsilon$. For ML100k we use $\epsilon = 0.01$ and for ML1M we use $\epsilon = 0.5$.

For each of the algorithms tested - the standard projected-gradient method (PGD) \cite{Nesterov13} and FISTA \cite{FISTA}, we manually find the minimum rank parameter $r$ for which the algorithm converges correctly  from the "warm-start" initialization to the optimal solution using only a rank-$r$ SVD computation on each iteration. These parameters are recorded in the columns titled "PGD rank" and "FISTA rank" in Table \ref{table:empirical}. 

As can be seen in Table \ref{table:empirical}, the results clearly show that in all considered cases  it indeed holds that $\rank(\X^*) = \#\sigma_1(\nabla{}f(\X^*))$, and that $\rank(\X^*)$ is significantly smaller than $\min\{m,n\}$. We also see that PGD converges correctly to the optimal solution with rank that does not exceed that of the optimal solution, while in some cases FISTA requires rank a bit larger than that of the optimal solution (by at most 2). 

 \begin{table*}[h]\renewcommand{\arraystretch}{1.3}
{\footnotesize
\begin{center}
  \begin{tabular}{| c | c | c | c | c | c | c | c |}    \hline
    dataset &  trace &  $\rank(\X^*)$ &$\#\sigma_1(\nabla{}f^*)$ & FISTA rank & PGD rank & MSE & spectral $\textrm{gap}$\\\hline
\multirow{5}{3.6em}{ML100k} & $2500$ & $3$ & 3& $3$& 3 & 1.3589 & 5.5844 \\
& $3000$ & $10$  & $10$& $10$ & 10 & 0.9871 & 0.3234 \\
& $3500$ & $41$ & $41$& $42$ & 41& 0.7573 & 0.0456 \\
& $4000$ & $70$ & $70$& $71$ & 70& 0.5846 & 0.0227 \\
& $5000$ &  $117$ & $117$& $118$& 117 & 0.3314 & 0.0148 \\\hline    
\multirow{4}{3.6em}{ML1M}  & $10000$ & 3 & $3$& $3$& 3 & 1.2184 & 3.2861  \\ 
& $12000$ & 12 & $12$& $12$ & 12& 0.9043 & 1.2056 \\
 & $14000$ & $74$ & 74& $75$ & 74 & 0.7236 & 0.0698 \\
& $16000$ & 155 & 155& 157 & 155& 0.5918 & 0.0119 \\\hline    
  \end{tabular}
  \caption{Convergence results for low-rank matrix completion with low-rank projections.
  }\label{table:empirical}
\end{center}
}
\end{table*}\renewcommand{\arraystretch}{1}

\pagebreak

\bibliographystyle{plain}
\bibliography{bib}


\end{document}